\title{Sensitivity of matrix function based network communicability measures: Computational methods and a priori bounds}
\author{M. Schweitzer\thanks{School of Mathematics and Natural Sciences, Bergische Universit\"at Wuppertal, 42097 Wuppertal, Germany, \email{marcel@uni-wuppertal.de}.}}
\DeclareMathAlphabet{\mathbf}{OT1}{cmr}{bx}{it}
\newcommand{\va}{{\mathbf a}}
\newcommand{\vb}{{\mathbf b}}
\newcommand{\vc}{{\mathbf c}}
\newcommand{\ve}{{\mathbf e}}
\newcommand{\vg}{{\mathbf g}}
\newcommand{\vu}{{\mathbf u}}
\newcommand{\vv}{{\mathbf v}}
\newcommand{\vw}{{\mathbf w}}
\newcommand{\vx}{{\mathbf x}}
\newcommand{\vy}{{\mathbf y}}
\newcommand{\vnull}{\boldsymbol{0}}
\newcommand{\vone}{\boldsymbol{1}}
\newcommand{\spK}{{\cal K}}
\newcommand{\lmin}{\lambda_{\min}}
\newcommand{\lmax}{\lambda_{\max}}
\renewcommand{\d}{\,\mathrm{d}}
\newcommand{\N}{\mathbb{N}}
\newcommand{\R}{\mathbb{R}}
\newcommand{\Rn}{\mathbb{R}^n}
\newcommand{\Rnn}{\mathbb{R}^{n \times n}}
\newcommand{\Rmm}{\mathbb{R}^{m \times m}}
\newcommand{\Cnn}{\mathbb{C}^{n \times n}}
\newcommand{\V}{\mathcal{V}}
\newcommand{\E}{\mathcal{E}}
\DeclareMathOperator{\spec}{spec}
\DeclareMathOperator{\trace}{tr}
\DeclareMathOperator{\TN}{TN}
\DeclareMathOperator{\tn}{TN}
\DeclareMathOperator{\SC}{SC}
\DeclareMathOperator{\EE}{EE}
\DeclareMathOperator{\vecop}{vec}
\newtheorem{remarksimple}[theorem]{Remark}
\let\oldremarksimple\remarksimple
\renewcommand{\remarksimple}{\oldremarksimple\normalfont}
\newenvironment{remark}{\begin{remarksimple}}{\hfill$\diamond$\end{remarksimple}}
\newtheorem{experimentsimple}[theorem]{Experiment}
\let\oldexperimentsimple\experimentsimple
\renewcommand{\experimentsimple}{\oldexperimentsimple\normalfont}
\newtheorem{examplesimple}[theorem]{Example}
\let\oldexamplesimple\examplesimple
\renewcommand{\examplesimple}{\oldexamplesimple\normalfont}
\newenvironment{example}{\begin{examplesimple}}{\hfill$\diamond$\end{examplesimple}}
\tikzset{
  nomorepostactions/.code={\let\tikz@postactions=\pgfutil@empty},
  mymark/.style 2 args={decoration={markings,
    mark= between positions 0 and 1 step (1/9)*\pgfdecoratedpathlength with{%
        \tikzset{#2,every mark}\tikz@options
        \pgfuseplotmark{#1}%
      },  
    },
    postaction={decorate},
    /pgfplots/legend image post style={
        mark=#1,mark options={#2},every path/.append style={nomorepostactions}
    },
  },
}
\pgfplotsset{compat=1.17}
\begin{document}
\maketitle

\pagestyle{myheadings} \thispagestyle{plain}
\markboth{M. SCHWEITZER}{SENSITIVITY OF COMMUNICABILITY MEASURES}

\begin{abstract}
When analyzing complex networks, an important task is the identification of those nodes which play a leading role for the overall communicability of the network. In the context of modifying networks (or making them robust against targeted attacks or outages), it is also relevant to know how sensitive the network's communicability reacts to changes in certain nodes or edges. Recently, the concept of \emph{total network sensitivity} was introduced in [O.\ De la Cruz Cabrera, J.\ Jin, S.\ Noschese, L.\ Reichel, \emph{Communication in complex networks}, Appl.\ Numer.\ Math., 172, pp.\ 186--205, 2022], which allows to measure how sensitive the total communicability of a network is to the addition or removal of certain edges. One shortcoming of this concept is that sensitivities are extremely costly to compute when using a straight-forward approach (orders of magnitude more expensive than the corresponding communicability measures). In this work, we present computational procedures for estimating network sensitivity with a cost that is essentially linear in the number of nodes for many real-world complex networks. Additionally, we extend the sensitivity concept such that it also covers sensitivity of subgraph centrality and the Estrada index, and we discuss the case of node removal. We propose a priori bounds for these sensitivities which capture well the qualitative behavior and give insight into the general behavior of matrix function based network indices under perturbations. These bounds are based on decay results for Fr\'echet derivatives of matrix functions with structured, low-rank direction terms which might be of independent interest also for other applications than network analysis.
\end{abstract}

\begin{keywords}
complex networks, total communicability, Estrada index, matrix exponential, Fr\'echet derivative, decay bounds
\end{keywords}

\begin{AMS}
05C50, 
05C82, 
15A16, 
65F60  
\end{AMS}

\section{Introduction}\label{sec:intro}
Complex networks---mathematically modeled by \emph{graphs} consisting of \emph{nodes} and \emph{edges}---occur as models in a wide range of application areas, including, but not limited to, biology, chemistry, life sciences, social sciences and humanities~\cite{Barabasi2003,BoccalettiLatoraMorenoChavezHwang2006,Estrada2012,Newman2018}. Central tasks in analyzing complex networks are identifying the most important (or \emph{central}) nodes in the network and measuring the overall \emph{communicability} of the network. Additionally, in particular when designing networks, it can be of interest to investigate how centrality and communicability react to \emph{modifications of the network}. This helps, e.g., to answer questions of robustness or vulnerability of a network with respect to outages or targeted attacks~\cite{CohenHavlin2010}, or it can help to decide how to best enhance/augment (by introducing additional connections) or optimize/trim (by removing unneeded edges) the network~\cite{ArrigoBenzi2016a,ArrigoBenzi2016b}. 

Many important network centrality indices and communicability measures are based on matrix functions, in particular the matrix exponential~\cite{BenziEstradaKlymko2013,BenziKlymko2013,DelaCruzCabreraMatarReichel2019,DeLaCruzCabreraMatarReichel2021,EstradaRodriguezVelaszquez2005,EstradaHatano2008,EstradaHigham2010} and the resolvent~\cite{Katz1953}, but sometimes also more general functions~\cite{ArrigoDurastante2021,BenziKlymko2015}; see also the recent survey~\cite{BenziBoito2020}. In the last few years, many different approaches have been developed which are intimately related to the question how these matrix function based centrality indices react to changes in the network. These include, e.g., algorithms for (near-)optimally up/downdating networks based on well-chosen heuristics~\cite{ArrigoBenzi2016a,ArrigoBenzi2016b}, algorithms for efficiently updating matrix functions under general low-rank changes~\cite{BeckermannKressnerSchweitzer2018,BeckermannCortinovisKressnerSchweitzer2021}, stability estimates based on decay bounds for matrix functions~\cite{PozzaTudisco2018} as well as sensitivity measures defined in terms of the Fr\'echet derivative~\cite{DelaCruzCabreraJinNoscheseReichel2021}. 

In this work, we develop computational procedures for approximating the Fr\'echet derivative based sensitivities that are asymptotically much more efficient than the methods originally proposed in~\cite{DelaCruzCabreraJinNoscheseReichel2021}, thus making it feasible to use the sensitivity for ranking the importance of edges also in large-scale networks. In particular, by leveraging an approach from~\cite{HighamRelton2016} for estimating the largest elements of an implicitly given matrix in combination with a Krylov subspace method proposed in~\cite{KandolfKoskelaReltonSchweitzer2021, Kressner2019}, we demonstrate how one can efficiently identify the edges with respect to which the network is most sensitive also in situations where it is not possible to explicitly compute or store all edge sensitivities.

We then extend the sensitivity concept by generalizing the measures from~\cite{DelaCruzCabreraJinNoscheseReichel2021} in several ways. On the one hand, we generalize from \emph{total communicability} to other frequently used measures like \emph{subgraph centrality} and the \emph{Estrada index}, and on the other hand, we extend it to also cover \emph{node} modifications in addition to \emph{edge} modifications. 

Additionally, we derive bounds for the decay in Fr\'echet derivatives with structured, low-rank direction terms and use these to obtain a priori bounds for network sensitivity measures, similar in spirit to the results of~\cite{PozzaTudisco2018}. These bounds mathematically confirm the intuition that nodes which are nearby a modified edge or node are more sensitive to the modification than nodes which are farther away. 

The remainder of the paper is organized as follows. In Section~\ref{sec:basics} we recall some basic facts about graphs, matrix functions and communicability measures and we brief\/ly review the concept of total network sensitivity introduced in~\cite{DelaCruzCabreraJinNoscheseReichel2021}. Additionally, we prove our first main result (Theorem~\ref{the:frechet_eij} and Corollary~\ref{cor:total_sensitivity_simple_formula}), which forms the basis for a large part of the developments in this manuscript. In Section~\ref{sec:algorithms}, we propose an algorithm for approximating network sensitivities, analyze its computational cost and demonstrate its viability on real-world networks. Section~\ref{sec:sensitivity} deals with the extension of the sensitivity concept to subgraph centrality and the Estrada index and to node modifications. In Section~\ref{sec:decay_bounds} we first derive results about the nonzero pattern of Fr\'echet derivatives of polynomial matrix functions with structured, low-rank direction terms and then use these to obtain a priori bounds on network sensitivity. Concluding remarks are given in Section~\ref{sec:conclusions}. Some technical proofs are collected in Appendix~\ref{sec:appendix_proofs}.

\section{Basics \& Notation}\label{sec:basics}

In this section, we recall some basic definitions and fix our notation.

\subsection{Notation}\label{subsec:notation}
We denote by $\ve_i \in \Rn$ the $i$th canonical unit vector and by $\vone = [1, \dots, 1]^T \in \Rn$ the vector of all ones. The $(i,j)$th entry of a matrix function $f(A)$ is denoted by $[f(A)]_{ij}$ and the \emph{trace} of $f(A)$, i.e., the sum of its diagonal entries, is denoted by $\trace(f(A))$. By $\|\cdot\|$ we denote the Euclidean vector norm and the spectral matrix norm it induces. The \emph{spectrum} of a matrix $A$, i.e., the set of all its eigenvalues, is denoted by $\spec(A)$.

\subsection{Graphs and matrices}\label{subsec:graphs_matrices}
A graph $G = (\V, \E)$ is defined by a set $\V$ of \emph{nodes} and a set $\E \subseteq \V \times \V$ of \emph{edges}. For simplicity, we will assume in the following that $\V = \{1,\dots,n\}$. A graph is called \emph{weighted} if it is equipped with a weight function $w: \E \longrightarrow \R^+$ that assigns a weight $w_{ij} > 0$ to each edge $(i,j) \in \E$. An unweighted graph can be interpreted as a weighted graph with all edge weights equal to $1$. If the set $\E$ is such that $(i,j) \in \E$ if and only if $(j,i)\in \E$, and $w_{ij} = w_{ji}$, then $G$ is called \emph{undirected graph}, otherwise it is called \emph{directed graph} (or \emph{digraph}). In the following, we always tacitly assume that $G$ contains no self loops (i.e., edges connecting a node to itself) and no multiple edges with the same direction between any two nodes. By $d(u,v)$, we denote the \emph{geodesic distance} in $G$, i.e., the smallest number of edges on any path connecting node $u$ to node $v$.

A (weighted) graph $G$ can be represented by the \emph{(weighted) adjacency matrix} $A_G \in \Rnn$ defined via
\begin{equation*}
    a_{ij} = 
    \begin{cases}
    w_{ij} & \text{ if } (i,j) \in \E\\
    0 & \text{ otherwise.}
    \end{cases}
\end{equation*}
Clearly, when $G$ is undirected, the adjacency matrix $A_G$ is symmetric.

Note that for naming nodes in a graph, we adopt the following convention: When an \emph{edge} is at the center of attention, then we denote its end nodes by $i$ and $j$. When \emph{nodes} themselves are at the center of attention, we denote them by $u$ or $v$.

\subsection{Functions of matrices and the Fr\'echet derivative}\label{subsec:matfun}
Matrix functions can be defined in many different ways. The three most popular ones are based on the Jordan canonical form, Hermite interpolating polynomials and the Cauchy integral formula; see~\cite[Section~1.2]{Higham2008} for a thorough treatment. 

As we are mostly interested in the exponential function in this work, we only recall the third of the above alternatives, which applies to functions $f$ which are analytic on a region that contains $\spec(A)$. In this case $f(A)$ can be defined via the Cauchy integral formula, 
\[
    f(A) := \frac{1}{2\pi i}\int_{\Gamma} f(\zeta)(\zeta I - A)^{-1} \d\zeta,
\]
where $\Gamma$ is a path that winds around $\spec(A)$ exactly once. For a matrix function $f$, the Fr\'echet derivative at the matrix $A$ is an operator $L_f(A, \cdot)$ which is linear (in the second argument) and satisfies
\begin{equation*}
  f(A + E) - f(A) = L_f(A,E) + o(\|{E}\|), \quad \textnormal{for all } E \in \Rnn.
\end{equation*} 

A sufficient condition for $L_f(A,\cdot)$ to exist is that $f$ is $2n-1$ times continuously differentiable on a region containing $\spec(A)$ (see~\cite[Theorem~3.8]{Higham2008}), and if the Fr\'echet derivative exists, it is unique. In particular, the Fr\'echet derivative of a matrix function is guaranteed to exist if $f$ is analytic on a region containing $\spec(A)$, and in this case it has the integral representation
\begin{equation}\label{eq:frechet_derivative_integral}
L_f(A,E) = \frac{1}{2\pi i} \int_\Gamma f(\zeta)(\zeta I - A)^{-1}E(\zeta I - A)^{-1}\d \zeta,
\end{equation}
where $\Gamma$ is again a path that winds around $\spec(A)$; see, e.g.,~\cite{Higham2008,KandolfRelton2017}. Clearly, the Fr\'echet derivative of the exponential function, $L_{\exp}(A,\cdot)$, which is of particular importance for this work, is guaranteed to exist for \emph{any} matrix $A$.

Related is the \emph{G\^ateaux (or directional) derivative} of $f$ at $A$, defined as
\begin{equation*}
    G_f(A,E) = \lim\limits_{h\rightarrow 0} \frac{f(A+hE)-f(A)}{h}.
\end{equation*}
If $L_f(A,\cdot)$ exists, then it is equal to $G_f(A,\cdot)$, but the converse is not necessarily true; even when all directional derivatives of $f$ at $A$ exist, $f$ does not need to be Fr\'echet differentiable at $A$. However, for the exponential function, both derivatives are guaranteed to exist and coincide, i.e.,
\begin{equation}\label{eq:frechet_gateaux}
    L_{\exp}(A,E) = G_{\exp}(A,E), \quad \textnormal{ for all } A, E \in \Rnn.
\end{equation}

The following important identity for the Fr\'echet derivative reduces its computation to that of a (block triangular) matrix function of twice the size:
\begin{equation*}
\arraycolsep=8pt
f\left(\left[\begin{array}{cc} A & E \\ 0 & A \end{array}\right]\right) = \left[\begin{array}{cc}f(A) & L_f(A,E) \\ 0 & f(A)\end{array}\right].
\end{equation*}

Associated with the Fr\'echet derivative is its \emph{Kronecker form} $K_f(A) \in \R^{n^2 \times n^2}$, the matrix which fulfills
\begin{equation}\label{eq:kronecker_form}
    K_f(A)\vecop(E) = \vecop(L_f(A,E)) \quad\text{ for all } E \in \Rnn,
\end{equation}
where $\vecop(M)$ stacks the columns of $M \in \Rnn$ into a vector of length $n^2$. Due to its very large size, the Kronecker form is seldomly used in actual computations, but it can be a useful theoretical tool.

\subsection{Communicability measures and total network sensitivity}\label{subsec:communicability_measures}
In many applications, it is important to measure how well information can spread through a network. A frequently used measure for this is \emph{total communicability}~\cite{BenziKlymko2013}, defined as
\begin{equation}\label{eq:total_communicability}
    C^{\tn}(A_G) = \vone^T\exp(A_G)\vone.
\end{equation}
For judging the importance/centrality of an individual node $v$ in the network, one often uses (exponential) subgraph centrality~\cite{EstradaRodriguezVelaszquez2005}, defined as
\begin{equation}\label{eq:exponential_subgraph_centrality}
    \vc^{\SC}(v) := \ve_v^T\exp(A_G)\ve_v.
\end{equation}
A related measure for the overall communicability of the network---and thus an alternative to total communicability~\eqref{eq:total_communicability}---is the \emph{Estrada index}, the sum of all subgraph centralities, i.e.,
\begin{equation}\label{eq:estrada_index}
    \EE(A_G) = \sum_{v = 1}^n \vc^{\SC}(v) = \trace(\exp(A_G));
\end{equation}
see, e.g.,~\cite{Estrada2012, EstradaHatano2008}. 

It is also often of interest how strongly the communicability of a network is \emph{affected by modifications} of the network~\cite{ArrigoBenzi2016a, ArrigoBenzi2016b, DelaCruzCabreraJinNoscheseReichel2021, PozzaTudisco2018}. The type of network modification that is most frequently considered in this context is the addition or removal of edges. In~\cite{DelaCruzCabreraJinNoscheseReichel2021} the concept of total network sensitivity is introduced, which measures how sensitive total communicability~\eqref{eq:total_communicability} is with respect to modification of one specific edge.

\begin{definition}\label{def:sensitivity}
Let $G = (\V, \E, w)$ be a weighted (di)graph with adjacency matrix $A_G \in \Rnn$ and let $E_{ij} := \ve_i\ve_j^T \in \Rnn$. Then the \emph{total network sensitivity of $G$ with respect to changes in $w_{ij}$} is defined in terms of the Fr\'echet derivative $L_{\exp}(A_G, E_{ij})$ as
\begin{equation}\label{eq:network_sensitivity}
S^{\tn}_{ij}(A_G) := \vone^TL_{\exp}(A_G, E_{ij})\vone.
\end{equation}
\end{definition}

\begin{remark}\label{rem:undirected}
When $G$ is undirected, it seems natural to define the sensitivity with respect to changes in $w_{ij}$ using the Fr\'echet derivative with respect to the symmetric rank-two direction term $E = \ve_i\ve_j^T + \ve_j\ve_i^T$. However, as the Fr\'echet derivative is linear in its second argument, we have $L_{\exp}(A_G, \ve_i\ve_j^T + \ve_j\ve_i^T) = L_{\exp}(A_G, \ve_i\ve_j^T) + L_{\exp}(A_G, \ve_j\ve_i^T)$. Further, by elementary properties, $L_{\exp}(A_G, E) = L_{\exp}(A_G, E^T)^T$ when $A_G$ is symmetric. We thus have
\begin{equation}\label{eq:transposition_invariance}
\vone^T \cdot L_{\exp}(A_G, \ve_i\ve_j^T + \ve_j\ve_i^T) \cdot \vone = 2 \cdot \vone^T \cdot L_{\exp}(A_G, \ve_i\ve_j^T) \cdot \vone
\end{equation}
Thus, in light of~\eqref{eq:transposition_invariance}
it also suffices to consider rank-one direction terms in the undirected case.
\end{remark}

The following relationship opens the door for efficiently computing total network sensitivity for large scale networks; cf.~Section~\ref{sec:algorithms}. We first formulate it for general analytic $f$, as it might be of independent interest also in other application areas and then state the formulation as needed in our setting.

\begin{theorem}\label{the:frechet_eij}
Let $A \in \Rnn, \vu, \vv\in \Rn$, let $f$ be Fr\'echet differentiable at $A$ and denote $E_{ij} = \ve_i\ve_j^T$. Then
\begin{equation}\label{eq:the_frechet_eij}
\vu^T L_f(A,E_{ij})\vv = [L_{f}(A^T,\vu\vv^T)]_{ij}.
\end{equation}
\end{theorem}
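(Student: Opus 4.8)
The plan is to work directly from the integral representation~\eqref{eq:frechet_derivative_integral} of the Fr\'echet derivative, which is available since in our application $f = \exp$ is entire (and more generally we may take $f$ analytic on a region containing $\spec(A)$; the same contour $\Gamma$ encircles $\spec(A) = \spec(A^T)$). Writing $R(\zeta) := (\zeta I - A)^{-1}$ and inserting $E_{ij} = \ve_i\ve_j^T$ into~\eqref{eq:frechet_derivative_integral}, the left-hand side of~\eqref{eq:the_frechet_eij} becomes
\[
\vu^T L_f(A, E_{ij})\vv = \frac{1}{2\pi i}\int_\Gamma f(\zeta)\,\bigl(\vu^T R(\zeta)\ve_i\bigr)\bigl(\ve_j^T R(\zeta)\vv\bigr)\,\d\zeta,
\]
the key point being that, because $E_{ij}$ has rank one, the matrix-valued integrand factors into a product of the two scalars $\alpha_i(\zeta) := \vu^T R(\zeta)\ve_i$ and $\beta_j(\zeta) := \ve_j^T R(\zeta)\vv$.

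Next I would expand the right-hand side in the same way. Using $(\zeta I - A^T)^{-1} = \bigl((\zeta I - A)^{-1}\bigr)^T = R(\zeta)^T$, inserting the rank-one direction $\vu\vv^T$ into~\eqref{eq:frechet_derivative_integral} with $A^T$ in place of $A$, and extracting the $(i,j)$ entry via $[M]_{ij} = \ve_i^T M \ve_j$ gives
\[
[L_f(A^T,\vu\vv^T)]_{ij} = \frac{1}{2\pi i}\int_\Gamma f(\zeta)\,\bigl(\ve_i^T R(\zeta)^T\vu\bigr)\bigl(\vv^T R(\zeta)^T\ve_j\bigr)\,\d\zeta.
\]
Since each parenthesized factor is a scalar, it equals its own transpose, so $\ve_i^T R(\zeta)^T\vu = \vu^T R(\zeta)\ve_i = \alpha_i(\zeta)$ and $\vv^T R(\zeta)^T\ve_j = \ve_j^T R(\zeta)\vv = \beta_j(\zeta)$. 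The two integrands therefore agree pointwise in $\zeta$, the integrals coincide, and this is exactly~\eqref{eq:the_frechet_eij}.

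Conceptually, the identity expresses that the Fr\'echet-derivative operator at $A^T$ is the adjoint of the one at $A$ with respect to the Frobenius inner product, i.e.\ $\langle L_f(A,E),F\rangle = \langle E, L_f(A^T,F)\rangle$ for all $E,F$; formula~\eqref{eq:the_frechet_eij} is the specialization $E = E_{ij}$, $F = \vu\vv^T$ once one rewrites $\vu^T(\cdot)\vv = \langle \vu\vv^T,\cdot\rangle$ for the left side and $[\cdot]_{ij} = \langle E_{ij},\cdot\rangle$ for the right side. This viewpoint is worth recording, as it makes clear why the roles of the ``probe'' vectors $\vu,\vv$ and the direction term get swapped.

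I expect the only genuine subtlety to concern the stated generality: the hypothesis asks merely that $f$ be Fr\'echet differentiable at $A$, not analytic, so the integral formula is not directly licensed. To cover this case I would fall back on the Kronecker form: for monomials one has $L_{x^k}(A,E) = \sum_{m=0}^{k-1}A^m E A^{k-1-m}$, and a short computation shows $K_{x^k}(A)^T = K_{x^k}(A^T)$; extending by linearity to polynomials and then using that $L_f(A,\cdot)$ coincides with $L_p(A,\cdot)$ for a Hermite interpolating polynomial $p$ fixed by the values of $f$ on $\spec(A) = \spec(A^T)$ yields $K_f(A)^T = K_f(A^T)$ in general, which is precisely the adjoint property above. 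The work is thus entirely bookkeeping---scalars equal their transposes, and a single contour (or interpolating polynomial) serves both $A$ and $A^T$---rather than anything deep.
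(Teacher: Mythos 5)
Your proof is correct, and its main argument takes a genuinely different route from the paper's. The paper never touches the integral representation: it vectorizes the scalar, $\vu^T L_f(A,E_{ij})\vv = (\vv^T\otimes\vu^T)\vecop(L_f(A,E_{ij}))$, inserts the Kronecker form~\eqref{eq:kronecker_form}, takes transposes, and invokes the identity $K_f(A)^T = K_f(A^T)$ (cited as known, not proved) to arrive at $\ve_{(j-1)n+i}^T\vecop(L_f(A^T,\vu\vv^T)) = [L_f(A^T,\vu\vv^T)]_{ij}$. Your Cauchy-integral argument---rank-one factorization of the integrand plus the fact that scalars equal their own transposes---is more self-contained in the analytic case, and since the paper only ever applies Theorem~\ref{the:frechet_eij} to $f=\exp$, analyticity is no practical restriction. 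Note also that your ``adjoint'' remark, $\langle L_f(A,E),F\rangle = \langle E,L_f(A^T,F)\rangle$, is precisely the operator-language restatement of $K_f(A)^T = K_f(A^T)$, so your conceptual viewpoint and the paper's proof are two phrasings of the same identity; accordingly, your fallback for merely Fr\'echet differentiable $f$ (monomials, linearity, Hermite interpolation) essentially reconstructs the paper's proof, while additionally sketching a justification of the key identity that the paper leaves to the literature. The only caveat concerns that sketch: replacing $L_f(A,\cdot)$ by $L_p(A,\cdot)$ for a Hermite interpolant $p$ requires $f$ to possess sufficiently many derivatives on $\spec(A)$ (interpolation to roughly doubled multiplicities, as guaranteed under the $(2n-1)$-fold differentiability hypothesis of~\cite[Theorem~3.8]{Higham2008}), which is formally a bit more than bare Fr\'echet differentiability---but the paper's unproved appeal to $K_f(A)^T=K_f(A^T)$ carries the same implicit assumption, so you lose no generality relative to the paper.
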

\begin{proof}[Proof of Theorem~\ref{the:frechet_eij}]
We start by vectorizing the left-hand side of~\eqref{eq:network_sensitivity}, noting that $\vecop(\alpha) = \alpha$ for any scalar $\alpha \in \R$, which yields
\begin{equation}\label{eq:proof_thm1}
\vu^TL_{f}(A, E_{ij})\vv = \vecop(\vu^TL_{f}(A, E_{ij})\vv) = (\vv^T \otimes \vu^T)\vecop(L_{f}(A,E_{ij})),
\end{equation}
where we have used the well-known relation $\vecop(BCD) = (D^T \otimes B)\vecop(C)$ for the second equality. Now, by inserting the definition~\eqref{eq:kronecker_form} of the Kronecker form of the Fr\'echet derivative into~\eqref{eq:proof_thm1}, we further have
\begin{equation}\label{eq:proof_simple_formula_total1}
\vu^TL_{f}(A, E_{ij})\vv = (\vv^T \otimes \vu^T)K_{f}(A)\vecop(E_{ij}) = (\vv^T \otimes \vu^T)K_{f}(A)\ve_{(j-1)n+i},
\end{equation}
as the vectorization of a matrix $E_{ij}$ with just a single entry 1 results in a canonical unit vector. Further, $\vecop(\vu\vv^T) = (\vv \otimes \vu)$, and $\alpha^T = \alpha$ for any scalar $\alpha$, so that by taking the transpose of~\eqref{eq:proof_simple_formula_total1} and noting that $K_{f}(A)^T = K_{f}(A^T)$, we obtain
\begin{align*}
   \vu^T L_f(A,E_{ij})\vv   &= \ve_{(j-1)n+i}^TK_{f}(A^T)(\vv \otimes \vu) \\
                            &= \ve_{(j-1)n+i}^T\vecop(L_{f}(A^T,\vu\vv^T)) \\
                            &= [L_{f}(A^T,\vu\vv^T)]_{ij},
\end{align*}
which concludes the proof.
\end{proof}

\begin{corollary}\label{cor:total_sensitivity_simple_formula}
Let $S_{ij}^{\TN}(A_G)$ denote total network sensitivity, defined in~\eqref{eq:network_sensitivity}. Then
\begin{equation}\label{eq:total_sensitivity_simple_formula}
S_{ij}^{\TN}(A_G) = [L_{\exp}(A_G^T,\vone\vone^T)]_{ij}.
\end{equation}
\end{corollary}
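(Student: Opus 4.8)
The plan is to recognize the corollary as an immediate specialization of Theorem~\ref{the:frechet_eij}. Recalling the definition~\eqref{eq:network_sensitivity} of total network sensitivity, namely $S^{\TN}_{ij}(A_G) = \vone^T L_{\exp}(A_G, E_{ij})\vone$, I would apply the identity~\eqref{eq:the_frechet_eij} with the particular choices $f = \exp$, $A = A_G$, and $\vu = \vv = \vone$.

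Before invoking the theorem, I would check that its hypotheses are met in this setting. The conditions $A_G \in \Rnn$, $\vone \in \Rn$, and $E_{ij} = \ve_i\ve_j^T$ are immediate. The only requirement needing a remark is that $f = \exp$ be Fr\'echet differentiable at $A = A_G$; but as noted following the integral representation~\eqref{eq:frechet_derivative_integral}, the Fr\'echet derivative $L_{\exp}(A,\cdot)$ is guaranteed to exist for \emph{every} matrix $A \in \Rnn$, so this hypothesis holds automatically.

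With the hypotheses verified, substituting $\vu = \vv = \vone$, $f = \exp$, and $A = A_G$ into~\eqref{eq:the_frechet_eij} yields $\vone^T L_{\exp}(A_G, E_{ij})\vone = [L_{\exp}(A_G^T, \vone\vone^T)]_{ij}$, which upon recalling the definition of $S^{\TN}_{ij}(A_G)$ is precisely the claimed formula~\eqref{eq:total_sensitivity_simple_formula}. I do not anticipate any genuine obstacle: all of the substantive work (the vectorization argument and the transposition identity $K_f(A)^T = K_f(A^T)$) already resides in the proof of Theorem~\ref{the:frechet_eij}, and the corollary is obtained simply by reading off the special case $\vu = \vv = \vone$ together with the observation that $\exp$ is entire and hence Fr\'echet differentiable everywhere.
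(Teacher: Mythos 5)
Your proof is correct and follows exactly the paper's own route: the corollary is obtained by specializing Theorem~\ref{the:frechet_eij} to $f = \exp$, $A = A_G$, and $\vu = \vv = \vone$. Your additional remark that $L_{\exp}(A,\cdot)$ exists for every matrix (so the theorem's differentiability hypothesis is automatic) is a point the paper makes in Section~\ref{subsec:matfun} and takes for granted in its one-line proof.
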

\begin{proof}
The result directly follows by applying Theorem~\ref{the:frechet_eij} to $f(A) = \exp(A_G)$ and $\vu = \vv = \vone$.
\end{proof}

The advantage of~\eqref{eq:total_sensitivity_simple_formula} over~\eqref{eq:network_sensitivity} is that it characterizes the network sensitivities $S_{ij}^{\TN}(A_G)$ with respect to all possible edge modifications as entries of \emph{a single Fr\'echet derivative}, while in the original formulation, \emph{the direction term changes} depending on the edge under consideration.

\section{Efficiently computing sensitivity measures}\label{sec:algorithms}
In this section, we discuss an algorithm for approximating total network sensitivity and in particular how to efficiently find the edge modifications with respect to which the network is most sensitive.

We begin by recapitulating a Krylov subspace method for approximating Fr\'echet derivatives with low-rank direction terms from~\cite{KandolfKoskelaReltonSchweitzer2021,Kressner2019} in Section~\ref{subsec:krylov}. This method forms a basic building block of our final algorithm (as well as of the original algorithm for network sensitivity from~\cite{DelaCruzCabreraJinNoscheseReichel2021}). 

\subsection{A basic Krylov subspace scheme for Fr\'echet derivatives}\label{subsec:krylov}
As it does not complicate the exposition, we consider the case of approximating the Fr\'echet derivative with respect to a general rank-one direction term $E = \vb\vc^T$ in the following, although we are mostly interested in direction terms with very specific structure. Without loss of generality, we further assume that $\|\vb\|=\|\vc\| = 1$.

By Corollary~\ref{cor:total_sensitivity_simple_formula}, total network sensitivity~\eqref{eq:total_sensitivity_simple_formula} with respect to all possible edge modifications can be obtained by computing a Fr\'echet derivative at $A_G^T$ with respect to a rank-one direction term.

To approximate $L_{\exp}(A_G^T, \vb\vc^T)$, the method introduced in~\cite{KandolfKoskelaReltonSchweitzer2021, Kressner2019} first computes orthonormal bases $V_m$, $W_m$ of the two Krylov subspaces $\spK_m(A_G^T, \vb)$ and $\spK_m(A_G, \vc)$ by the Arnoldi method~\cite{Arnoldi1951}, yielding Arnoldi decompositions
\begin{eqnarray}
A_G^TV_m &=& V_mG_m + g_{m+1,m} \vv_{m+1}\ve_m^T \label{eq:arnoldi_relation1},\\
A_GW_m &=& W_mH_m + h_{m+1,m} \vw_{m+1}\ve_m^T \label{eq:arnoldi_relation2},
\end{eqnarray}
where we assume that no breakdown occurs. Note that it is also possible to build two Krylov spaces of different dimensions $m_1 \neq m_2$, respectively. An approximation for $L_{\exp}(A_G^T,\vb\vc^T)$ is then extracted from the tensorized Krylov subspace $\spK_m(A_G, \vc) \otimes \spK_m(A_G^T,\vb)$ as
\begin{equation}\label{eq:Lm}
    L_m = V_mX_mW_m^T,
\end{equation}
where $X_m$ is obtained as the upper right block of a $2m \times 2m$ matrix function,
\begin{equation}\label{eq:krylov_frechet_block}
\arraycolsep=8pt
\exp\left(\left[\begin{array}{cc}G_m & \ve_1\ve_1^T\\ 0 & H_m^T\end{array}\right]\right) = \left[\begin{array}{cc}\exp(G_m) & X_m \\ 0 & \exp(H_m^T)\end{array}\right].
\end{equation}

\begin{algorithm}
\caption{Krylov method for computing the Fr\'echet derivative $L_{\exp}(A^T, \vb\vc^T)$ \label{alg:krylov}}
\begin{algorithmic}[1]

\smallskip

\Statex \textbf{Input:} $A_G \in \R^{n \times n}, \vb, \vc \in \R^n, m \in \N$
\Statex \textbf{Output:} Low-rank factors $V_m, W_m \in \R^{n \times m}, X_m \in \Rmm$ according to~\eqref{eq:Lm}

\smallskip

\State Compute $V_m, G_m$ by $m$ steps of Arnoldi for $A_G^T$ and $\vb$
\State Compute $W_m, H_m$ by $m$ steps of Arnoldi for $A_G$ and $\vc$
\State $C_m \leftarrow \exp\left(\begin{bmatrix} G_m & \ve_1\ve_1^T\\ 0 & H_m^T\end{bmatrix}\right)$
\State $X_m \leftarrow C_m(1:n,n+1:2n)$
\end{algorithmic}
\end{algorithm}

We summarize this procedure in Algorithm~\ref{alg:krylov}. Let us brief\/ly comment on its computational cost: For each of the two Arnoldi decompositions~\eqref{eq:arnoldi_relation1}--\eqref{eq:arnoldi_relation2}, $m$ matrix-vector products need to be computed. Assuming that $G$ is a sparse graph with $\mathcal{O}(n)$ edges, this requires $\mathcal{O}(nm)$ arithmetic operations. Additionally, a modified Gram--Schmidt orthogonalization for the $m+1$ basis vectors is necessary, requiring $\mathcal{O}(nm^2)$ operations. If $G$ is undirected, so that $A_G$ is symmetric, this cost reduces to $\mathcal{O}(nm)$ if no reorthogonalization is performed. Evaluating the matrix function~\eqref{eq:krylov_frechet_block} has a cost of $\mathcal{O}(m^3)$. For $m \ll n$, the overall computational cost for the Krylov method outlined above is therefore given by $\mathcal{O}(nm^2)$ if $G$ is directed and $\mathcal{O}(nm)$ if $G$ is undirected. Note that we have so far omitted the cost for explicitly forming $L_m$ via~\eqref{eq:Lm}, as this is typically prohibitively expensive: In general, $L_m$ is a dense matrix of size $n \times n$, so explicitly forming it requires $\mathcal{O}(n^2)$ storage and has a computational cost of $\mathcal{O}(n^2m+nm^2)$, both of which are not feasible for large scale networks.

If only a few individual entries of $L_m$ are required, these can be cheaply computed at a cost of $\mathcal{O}(m^2)$ per entry, as summarized in the following proposition.

\begin{proposition}\label{prop:individual_entry}
Let $V_m, W_m, X_m$ be computed as explained above and let $L_m$ be defined via~\eqref{eq:Lm}. Then, the entries of $L_m$ are given by
\begin{equation*}
    [L_m]_{uv} = \sum\limits_{i=1}^m\sum\limits_{j=1}^m [V_m]_{ui}[X_m]_{ij}[W_m]_{vj}.
\end{equation*}
Consequently, given $V_m, W_m, X_m$, computing an individual entry of $L_m$ has computational complexity $\mathcal{O}(m^2)$. 
\end{proposition}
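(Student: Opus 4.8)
The plan is to expand the matrix product $L_m = V_m X_m W_m^T$ entrywise, directly from the definition of matrix multiplication. First I would record the sizes of the factors, $V_m \in \R^{n \times m}$, $X_m \in \Rmm$, and $W_m^T \in \R^{m \times n}$, and write out the $(u,v)$ entry of the triple product as a double sum over the two inner indices,
\[
[L_m]_{uv} = \sum_{i=1}^m \sum_{j=1}^m [V_m]_{ui}\,[X_m]_{ij}\,[W_m^T]_{jv}.
\]
The only point requiring any care is the transpose in the third factor: substituting $[W_m^T]_{jv} = [W_m]_{vj}$ immediately turns this into the asserted identity. This part is a direct consequence of associativity of matrix multiplication together with the definition of the transpose, so there is essentially no obstacle here.

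For the complexity claim I would argue that, \emph{given} the factors $V_m, W_m, X_m$, the double sum can be evaluated in $\mathcal{O}(m^2)$ arithmetic operations. A clean way to see this, independent of the summation order, is to compute the entry in two stages: first form the row vector $[V_m]_{u,:}\,X_m \in \R^{1 \times m}$ at a cost of $\mathcal{O}(m^2)$, and then take its inner product with the row $[W_m]_{v,:} \in \R^{1 \times m}$ at a cost of $\mathcal{O}(m)$. The total is dominated by the first stage, yielding the claimed $\mathcal{O}(m^2)$ bound.

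The closest thing to an obstacle is simply avoiding an overly pessimistic operation count: one should note that extracting a single entry does \emph{not} require forming the full product $V_m X_m W_m^T$ (which would cost $\mathcal{O}(n^2 m + n m^2)$ and is precisely what the proposition is meant to circumvent), nor any multiplication of two full $m \times m$ matrices, since only one row of $V_m$ and one row of $W_m$ enter the computation. Once this observation is made, the $\mathcal{O}(m^2)$ bound follows at once and the proof is complete.
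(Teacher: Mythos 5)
Your proposal is correct and follows essentially the same route as the paper, which simply invokes the formula $L_m = V_m X_m W_m^T$ and the rules of matrix-matrix multiplication; you have just written out the entrywise expansion and the two-stage evaluation explicitly. The observation that one must avoid forming the full product, and that only one row of $V_m$ and one row of $W_m$ are needed, is exactly the intended content of the complexity claim.
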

\begin{proof}
The result follows directly from the formula~\eqref{eq:Lm} for the approximation $L_m$ and the rules of matrix-matrix multiplication.
\end{proof}

\subsection{Finding the top few sensitivities}\label{subsec:top_sensitivities}
As already commented at the end of the preceding section, it is not possible to compute or store sensitivities with respect to all possible edge modifications when $G$ is large. While it is indeed cheaply possible to recover individual entries of $L_m$ according to Proposition~\ref{prop:individual_entry}, there is a fundamental flaw in this approach: typical use cases for computing sensitivities are finding a (close to) optimal update of a network or identifying the most vulnerable parts of the network, both of which require identifying a few edges with very high sensitivity values. Thus, while it \emph{is} sufficient to know \emph{just a few} sensitivity values, it is not known a priori \emph{which ones}.

If storage of the dense $n \times n$ matrix $L_m$ is the main concern, then one can use Proposition~\ref{prop:individual_entry} to compute all individual sensitivities one after the other, keeping track of the $p$ largest or smallest values (and their locations), discarding all other sensitivities. This way, the top $p$ sensitivities can be found consuming only a fixed amount of storage, but at the very high computational cost of $\mathcal{O}(m^2n^2)$.

We therefore now highlight a better approach for tackling this problem, leveraging a method from~\cite{HighamRelton2016} for computing the largest elements of an implicitly given matrix $S$, accessing it only via matrix-vector products.  We brief\/ly outline this method in its most basic form, closely following the presentation in~\cite{HighamRelton2016}. For further details and more sophisticated variants, we refer the reader to~\cite[Sections~2,~4 and~5]{HighamRelton2016}.

Assume we want to find the single largest element (in modulus) of the matrix $S$. A first observation is that this can be interpreted as a mixed subordinate norm,
\begin{equation}\label{eq:mixed_subordinate_norm}
    \max_{i,j = 1,\dots,n} |s_{ij}| = \max_{\vx \neq \vnull} \frac{\|S\vx\|_{\infty}}{\|\vx\|_1}.
\end{equation}
Finding the maximum on the right-hand side of~\eqref{eq:mixed_subordinate_norm} can be phrased as the optimization problem
\begin{align}\label{eq:optimization_problem}
\begin{split}
    \max \ \ \ & F(\vx) := \|S\vx\|_\infty\\
    \textnormal{s.t.} \ \ \ & x \in \mathbb{E} := \{\vx : \|\vx\|_1 \leq 1\}.
\end{split}
\end{align}
This is a convex optimization problem, and for any $\vx \in \mathbb{E}$ there exists at least one subgradient, i.e., a vector $\vg$ for which $F(\vy) \geq F(\vx) + \vg^T(\vy-\vx)$ for all $\vy \in \mathbb{E}$. The set of all subgradients of $F$ at $\vx$ is denoted by $\partial F(\vx)$. Further, we denote the \emph{dual set} of a vector $\vx$ by
\begin{equation*}
    \text{dual}_\infty(\vx) := \{ \vy : \vy^T\vx = \|\vx\|_\infty, \ \|\vy\|_1 = 1 \}.
\end{equation*}
Then clearly, a vector $\vy^\ast$ that maximizes $\vg^T(\vy-\vx)$ must fulfill $\vy^\ast \in \text{dual}_\infty(\vg)$ and the set of subgradients at $\vx$ fulfills $\partial F(\vx) \supseteq S^T\text{dual}_\infty(S\vx)$. By some algebraic manipulations, one can show that for maximizing $F$ one can always select a subgradient $\vg \in S^T\text{dual}_\infty(S\vx)$. These observations directly give rise to a method for solving~\eqref{eq:optimization_problem}, which alternatingly selects a subgradient $\vg \in S^T\text{dual}_\infty(S\vx)$ and a point $\vx \in \text{dual}_\infty(\vg)$. This requires performing two matrix-vector products per iteration, one with $S$ and one with $S^T$. We give an algorithmic description of this method as Algorithm~\ref{alg:power}. 

\begin{algorithm}
\caption{Power method for finding largest modulus element of a matrix\label{alg:power}}
\begin{algorithmic}[1]
\setstretch{1.2}

\smallskip

\Statex \textbf{Input:} $S \in \R^{n_1 \times n_2}$
\Statex \textbf{Output:} $\gamma \in \R, \vx \in \R^{n_2}$ s.t. $\gamma \leq \max_{i,j} s_{ij}$ and $\|S\vx\|_\infty = \gamma\|\vx\|_1$

\smallskip

\State $\vx \leftarrow (1/n_2)\vone$
\For{$k = 1, 2, \dots$}
    \State $\vy \leftarrow S\vx$ 
    \If{$k > 1$}
        \If{$\|\vy\|_\infty \leq \|\vg\|_\infty$}
            \State $\gamma = \|\vg\|_\infty$
            \State quit
        \EndIf
    \EndIf
    \State Select smallest $i$ such that $|\vy_i| = \|\vy\|_\infty$
    \State $\vg \leftarrow S^T\ve_i$
    \If{$\|\vy\|_\infty \leq \|\vg\|_\infty$}
        \State $\gamma = \|\vg\|_\infty$
        \State quit
    \EndIf
    \State Select smallest $j$ such that $|\vg_j| = \|\vg\|_\infty$
    \State $\vx \leftarrow \ve_j$
\EndFor

\end{algorithmic}
\end{algorithm}

Let us note that Algorithm~\ref{alg:power} was (in a similar form) already proposed in~\cite{Boyd1974,Tao1984} before~\cite{HighamRelton2016}. However, in~\cite{HighamRelton2016}, the concept is extended in several ways, by introducing a blocked version of the algorithm and (by using deflation) a version which allows to estimate more than just the single largest element of the matrix $S$. Without going into details of the derivation, we note that~\cite[Algorithm~5.2]{HighamRelton2016} approximates the $p$ largest elements of $S$ at a cost of $2\alpha p$ matrix vector products per iteration (half of them with $S$ and half of them with $S^T$), where $\alpha \in \mathbb{N}$ is a moderate constant (typically, $\alpha = 3$ suffices). While the number of matrix-vector products per iteration of the algorithm might seem high, theoretical results and extensive numerical evidence show that the algorithm typically converges within just two iterations; see~\cite[Sections~3 and~6]{HighamRelton2016}. We note that the algorithm might fail, although this is rarely encountered in practice, barring some academic example matrices.

Returning to our setting, assume we want to find the $p$ edges with respect to which the network is most sensitive. From the Krylov subspace method outlined in Section~\ref{subsec:krylov}, we obtain the factor matrices $V_m,W_m,X_m$ at a cost of $\mathcal{O}(nm^2)$ operations. Given these matrices, matrix-vector products with $L_m \approx L_{\exp}(A_G^T, \vone\vone^T)$ can be efficiently carried out in factored form,
\begin{equation*}
    L_m\vx = V_m(X_m(W_m^T\vx)),
\end{equation*}
requiring $2nm + m^2$ arithmetic operations. Thus, subsequently applying~\cite[Algorithm~5.2]{HighamRelton2016} given the factored matrices (assuming two iterations are required for convergence) will require an overall computational cost of $4\alpha p (2nm + m^2) = 8\alpha p nm + 4\alpha p m^2$. For many real-world networks, a small number $m = \mathcal{O}(1)$ of Krylov steps is sufficient, in particular as only rough estimates of the actual sensitivities are required, as long as their relative ordering is captured accurately. In this case, the complexity of the Krylov method for approximating the Fr\'echet derivative is $\mathcal{O}(n)$ and the subsequent estimation of the largest $p$ elements has asymptotic cost $\mathcal{O}(\alpha p n)$, so that the cost of the overall method scales linearly in the number $n$ of nodes in $G$, making it feasible also for very large-scale problems, as long as $p \in \mathcal{O}(1)$.

The approach outlined above allows to estimate the $p$ largest sensitivities $S_{ij}^{\TN}$ at cost that is linear in $n$. Whether the entries that the algorithm returns belong to \emph{existing} edges $\{i,j\}\in \E$ or to \emph{``virtual''} edges $\{i,j\} \notin \E$ is outside of the control of the user. If one is interested in updating the network by adding virtual edges such that the communicability increases but the algorithm only returns sensitivities of edges already present in the network, nothing is gained. A simple, heuristic safety measure would be to estimate the $q > p$ top sensitivities and check a posteriori which of those correspond to virtual edges. This is not satisfactory for several reasons. In particular, it is not clear how much larger than $p$ the value $q$ must be chosen (and this is highly problem dependent), and additionally, the cost of algorithms scales with the number of sensitivities that one estimates. Another problem is that, occasionally, ``diagonal'' sensitivities $S_{ii}$ might be returned by the algorithm, although one will typically not want to introduce self-loops.

In order to resolve these problems, instead of applying the maximum element estimator to $L_m$, one wants to apply it to a ``masked'' version of the matrix, $L_m^{\text{masked}} := M \odot L_m$, where $M$ is a binary mask that marks candidate edges and $\odot$ denotes the Hadamard (or element-wise) matrix product. Typical choices for the binary mask are given either by (the unweighted version of) the adjacency matrix $A_G$ if only sensitivities of existing edges are required, or by an ``inverted'' version of $A_G$ (with zero diagonal to prevent self-loops), if only sensitivities of virtual edges are required.

This approach requires forming matrix vector products with $L_m^{\text{masked}}$ instead of $L_m$, which complicates the computation due to the presence of the Hadamard product. It is well-known that if one of the factors in the Hadamard product is a low-rank matrix $BC^T$ with thin $B, C \in \R^{n \times r}$, $r \ll n$, then an efficient matrix vector product is possible via
\begin{equation}\label{eq:hadamard_matvec}
    (A \odot BC^T)\vx = \sum_{i=1}^r D_{\vb_i}AD_{\vc_i}\vx,
\end{equation}
where $\vb_i, \vc_i, i = 1,\dots,r$ are the columns of $B$ and $C$, respectively, and $D_\vy$ is a diagonal matrix with the entries of the vector $\vy$ on the diagonal. The right-hand side of~\eqref{eq:hadamard_matvec} can be evaluated essentially at a cost of $r$ matrix vector products with $A$. This approach thus gives rise to an efficient matrix vector under the two conditions that $r$ is small and that $A$ exhibits a fast matrix-vector product.

When trying to estimate the largest entries in $L_m^{\text{masked}}$, we are exactly in such a situation, as $L_m$ is of rank $m \ll n$, where we typically even have $m = \mathcal{O}(1)$. Therefore, the matrix-vector product with the mask $M$ scales linearly with $n$ for ``typical'' masks: If we are interested in all existing edges, then $M = A_G$ and a matrix vector product has cost $\mathcal{O}(n)$, as $G$ is a sparse graph by assumption. If we are interested in all virtual edges, then $M = \vone\vone^T-(A_G+I)$, with which we can efficiently compute matrix vector products via $\vx \mapsto (\vone^T\vx)\vone - A_G\vx-\vx$, also at cost linear in $n$. We summarize the final procedure in Algorithm~\ref{alg:top_p}.

\begin{algorithm}
\caption{Estimating the top $p$ edge sensitivities \label{alg:top_p}}
\begin{algorithmic}[1]
\setstretch{1.2}

\smallskip

\Statex \textbf{Input:} $A_G \in \R^{n \times n}$, $\texttt{virtual} \in \{\texttt{true},\texttt{false}\}$
\Statex \textbf{Output:} $p$ [existing/virtual] edges in $G$ with highest total sensitivity

\smallskip

\State Compute $V_m, X_m, W_m$ by Algorithm~\ref{alg:krylov} with $\vb = \vc = \vone$
\State Compute singular value decomposition $U_X\Sigma_X V_X^T = X_m$
\State Set $B \leftarrow V_mU_X\Sigma_X^{1/2}$
\State Set $C \leftarrow W_mV_X\Sigma_X^{1/2}$
\If{\texttt{virtual} = \texttt{false}}
\State Set $M \leftarrow A_G$
\Else
\State Set $M \leftarrow \vone\vone^T-(A_G+I)$
\EndIf
\State Estimate top $p$ sensitivities using~\cite[Algorithm~5.2]{HighamRelton2016} for the matrix $M \odot BC^T$
\end{algorithmic}
\end{algorithm}

\begin{remark}\label{rem:top_few_existing}
Let us note that when we are interested in the top $p$ sensitivities of existing edges, it will often be preferable to simply evaluate all those sensitivities via Proposition~\ref{prop:individual_entry} at a cost of $\mathcal{O}(nm^2)$. This is the same asymptotic cost as that of the Hadamard masking approach, but the constant hidden in the $\mathcal{O}$ will typically be much larger for the latter approach. Additionally, the approach based on~\cite[Algorithm~5.2]{HighamRelton2016} might fail in rare situations.
\end{remark}

\begin{remark}\label{rem:edge_sensitivity_undirected}
When $G$ is undirected, effort can be saved by including only the upper (or only the lower) triangle of $A_G$ in the definition of the binary mask, as the sensitivity with respect to changes in $(i,j)$ is the same as the sensitivity with respect to changes in $(j,i)$.
\end{remark}

\subsection{Numerical experiments}
In this section, we perform numerical experiments to illustrate the performance of Algorithm~\ref{alg:top_p}. All experiments are carried out in MATLAB R2022a on a PC with an AMD Ryzen 7 3700X 8-core CPU with clock rate 3.60GHz and 32 GB RAM.

\begin{example}
\begin{figure}
    \centering
    \includegraphics[width=.99\textwidth]{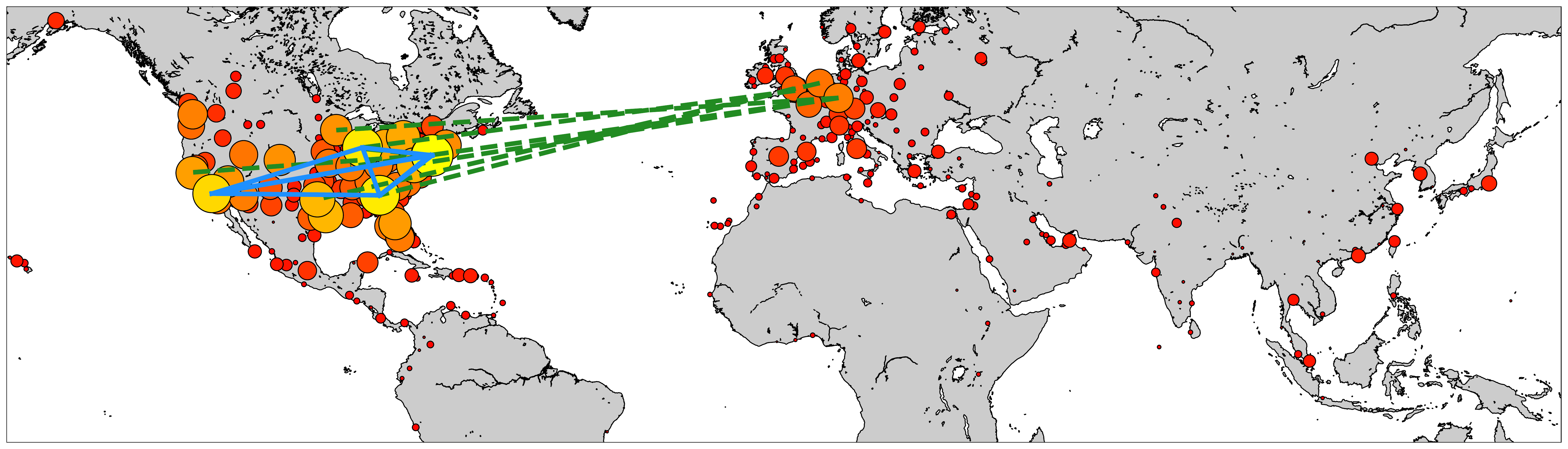}
    \caption{Visualization of the \texttt{Air500} network. The blue, solid edges are existing edges with highest sensitivity $S^{\TN}_{ij}$, while the green, dashed edges are \emph{non-existing/virtual} edges with highest sensitivity $S^{\TN}_{ij}$ (according to Algorithm~\ref{alg:top_p}); see the text for details. All other edges are omitted. The size and color of nodes encode their total communicability (with lighter colors corresponding to larger values). Note that a few nodes (with low communicability) of the network are not included in the excerpt, as they lie farther to the north or south (world map generated with the Python \texttt{basemap} package).}\label{fig:air500}
\end{figure}
Our first example is inspired by~\cite[Example~5.2]{DelaCruzCabreraJinNoscheseReichel2021} and uses the network \texttt{Air500}~\cite{MarcelinoKaiser2012}, which contains the top $n = 500$ airports in the world as nodes (based on passenger volume between July 2007 and June 2008) and models flights between these airports as edges (which gives $|\E| = 24009$ edges in total). The graph is directed and unweighted. Figure~\ref{fig:air500} contains a visualization of a large part of the network (in which we left out most edges for improved clarity). The color and size of nodes encode their total communicability~\eqref{eq:total_communicability}.

\begin{table}
\caption{Run time and number of Krylov iterations for the compared algorithms applied to the \texttt{Air500} network. For the algorithm from~\cite{DelaCruzCabreraJinNoscheseReichel2021}, we report the \emph{average} number of Krylov iterations across all calls to Algorithm~\ref{alg:krylov}.}
\centering
\renewcommand{\arraystretch}{1.25}
\begin{tabular}{l|l|r|r|r}
\hline\hline
 & \textbf{method} & \textbf{run time} & \textbf{Krylov it.} & calls of Alg.~\ref{alg:krylov}  \\
\hline
\multirow{2}{*}{\textbf{existing edges}}  & Algorithm~\ref{alg:top_p} & 0.1s & 11 & 1 \\
\cline{2-5}
                                 & Method from~\cite{DelaCruzCabreraJinNoscheseReichel2021} & 50.5 s & 9.4 & 24009 \\
\hline
\multirow{2}{*}{\textbf{virtual edges}}   & Algorithm~\ref{alg:top_p} & 0.1 s & 11 & 1 \\
\cline{2-5}
                                 & Method from~\cite{DelaCruzCabreraJinNoscheseReichel2021} & 482.2 s & 9.73 & 225491 \\
\hline
\hline
\end{tabular}
    \label{tab:air500_time}
\end{table}

\begin{table}
\caption{Existing and virtual edges with highest sensitivity according to our method and the method from~\cite{DelaCruzCabreraJinNoscheseReichel2021} (with highest sensitivity at the top). Edges are identified with flight connections, using the three-character IATA codes of the corresponding airports. For existing edges, both methods yield exactly the same result, while for virtual edges they differ slightly (edges that are selected by just one of the algorithms are marked in bold). Updating the graph using the edges selected by the method from~\cite{DelaCruzCabreraJinNoscheseReichel2021} increases the total communicability of the network by $13.21\%$ while using the edges selected by our new method increases it by $12.19\%$.}
\centering
\renewcommand{\arraystretch}{1.25}
\begin{tabular}{c|c||c|c}
\hline\hline
\multicolumn{2}{c||}{\textbf{existing edges}} & \multicolumn{2}{c}{\textbf{virtual edges}} \\
\hline
\textbf{Algorithm~\ref{alg:top_p}} & \textbf{Method from~\cite{DelaCruzCabreraJinNoscheseReichel2021}} & \textbf{Algorithm~\ref{alg:top_p}} & \textbf{Method from~\cite{DelaCruzCabreraJinNoscheseReichel2021}}\\
\hline
JFK -- ATL & JFK -- ATL & JFK -- LGA & JFK -- LGA\\  
ORD -- JFK & ORD -- JFK & LHR -- ATL & \textbf{LGA -- JFK}\\
JFK -- ORD & JFK -- ORD & AMS -- DFW & LHR -- ATL\\
ATL -- JFK & ATL -- JFK & JFK -- MDW & AMS -- DFW\\
JFK -- LAX & JFK -- LAX & \textbf{ORD -- MDW} & ATL -- LHR\\
EWR -- JFK & EWR -- JFK & \textbf{FRA -- MSP} & \textbf{MDW -- JFK}\\ 
JFK -- EWR & JFK -- EWR & \textbf{LGW -- ORD} & \textbf{JFK -- MDW}\\
ORD -- ATL & ORD -- ATL & \textbf{FRA -- BWI} & \textbf{ABQ -- JFK}\\
LAX -- JFK & LAX -- JFK & \textbf{OAK -- EWR} & \textbf{DFW -- AMS}\\
ATL -- ORD & ATL -- ORD & \textbf{FRA -- STL} & \textbf{ORD -- LGW}\\
\hline\hline
\end{tabular}
\label{tab:air500_edges}
\end{table}

Similar to what was done in~\cite[Example~5.2]{DelaCruzCabreraJinNoscheseReichel2021}, we try to find the top $p = 10$ existing and the top $p = 10$ virtual edges in the network according to the sensitivity of total communicability. We compare our method, Algorithm~\ref{alg:top_p}, to the basic Krylov method used in~\cite{DelaCruzCabreraJinNoscheseReichel2021}, which essentially computes all individual sensitivities by evaluating one Fr\'echet derivative per edge and then selects the edges with the $p$ largest values. To make comparisons as fair as possible, we also use Algorithm~\ref{alg:krylov} as backbone for this method (in~\cite{DelaCruzCabreraJinNoscheseReichel2021}, several different Krylov methods were introduced, but the method from~\cite{KandolfKoskelaReltonSchweitzer2021,Kressner2019} turned out to be among those giving the best balance between speed and accuracy). As stopping criterion for the Krylov method, we compute the norm of the difference between consecutive iterates (which can be computed without explicitly forming the iterates; cf.~\cite[Section~5]{KandolfKoskelaReltonSchweitzer2021}) and check whether it is below a prescribed tolerance $\texttt{tol}$. Note that in our method, the accuracy requirement applies to the matrix containing all sensitivities, while in the method from~\cite{DelaCruzCabreraJinNoscheseReichel2021}, it is applied to each individual sensitivity. Thus, to obtain a fair comparison, we reduce the tolerance to $\texttt{tol}/n$ in our method. As a rather crude accuracy is typically sufficient (as we are mainly interested in the ranking of the nodes, not the precise sensitivity values), we use $\texttt{tol} = 10^{-3}$ in this experiment. For the maximum element estimator, we choose $\alpha = 3$.

The run time and number of Krylov iterations required by the different methods are depicted in Table~\ref{tab:air500_time} and the edges that both algorithms select are listed in Table~\ref{tab:air500_edges}. Note that the method from~\cite{DelaCruzCabreraJinNoscheseReichel2021} requires one call to Algorithm~\ref{alg:krylov} for each existing/virtual edge, and we report the average number of Krylov iterations over all these calls in Table~\ref{tab:air500_time}. 

As one would expect, both methods yield very similar results (for existing edges, both results are actually identical). In case of deviations, one can expect the results of the method from~\cite{DelaCruzCabreraJinNoscheseReichel2021} to be closer to the ``ground truth ranking'', as all sensitivities are explicitly computed. In Algorithm~\ref{alg:top_p}, it might happen that a few top edges are missed by the maximum element estimator, in particular if sensitivity scores of multiple edges are very close to each other (as it is the case here). Still, in this setting, the update suggested by our method will also be sensible from an application point of view. To confirm this, we compute the actual effect that both updates have on the total communicability of the network. Introducing the edges selected by the method from~\cite{DelaCruzCabreraJinNoscheseReichel2021} increases the total communicability of the network by $13.21\%$, while the update computed by Algorithm~\ref{alg:top_p} increases it by $12.19\%$. Thus, we find an update which is almost as good as the ``ground truth update'', but at a cost which is several orders of magnitude smaller.

It is interesting to note that the top $p=10$ existing edges connect five large US airports (John F.\ Kennedy, Newark, Chicago O'Hare, Atlanta, Los Angeles), while many of the virtual edges that our method selects for greatly improving the total communicability of the network connect large European airports (Amsterdam, Frankfurt, London Heathrow, London Gatwick) to US airports.

The run time of Algorithm~\ref{alg:top_p} is much lower than that of the method from~\cite{DelaCruzCabreraJinNoscheseReichel2021}, as we only need to approximate one Fr\'echet derivative, instead of $24009$ (existing edges) or $500\cdot499 - 24009 = 225491$ (virtual edges). To reduce run time of the method from~\cite{DelaCruzCabreraJinNoscheseReichel2021}, one could of course only compute sensitivities of edges between nodes with high total communicability (e.g., the top 10\%, similar to what is done in up/downdating heuristics for large scale networks in~\cite{ArrigoBenzi2016a,ArrigoBenzi2016b}), as it is very likely that the edges with highest sensitivity belong to this set. Even then, the run time of our new method can be expected to still be orders of magnitude smaller.

We note that it is crucial for the efficient applicability of our method that the number of required Krylov iterations is quite small (and independent of the network size $n$, if we want to obtain linear scaling), as it also directly influences the cost of the second stage of the method, as matrix-vector products with $V_mX_mW_m^T$ become more expensive as $m$ grows; see also the discussion in Section~\ref{subsec:top_sensitivities}. It is observable from the results in Table~\ref{tab:air500_time}, that a small number of iterations, $m = 11$, is sufficient to reach the desired accuracy $2\cdot 10^{-6}$ for this example network. 
\end{example}

\begin{example}\label{ex:geom}
\begin{figure}
    \centering
    \includegraphics[width=.45\textwidth]{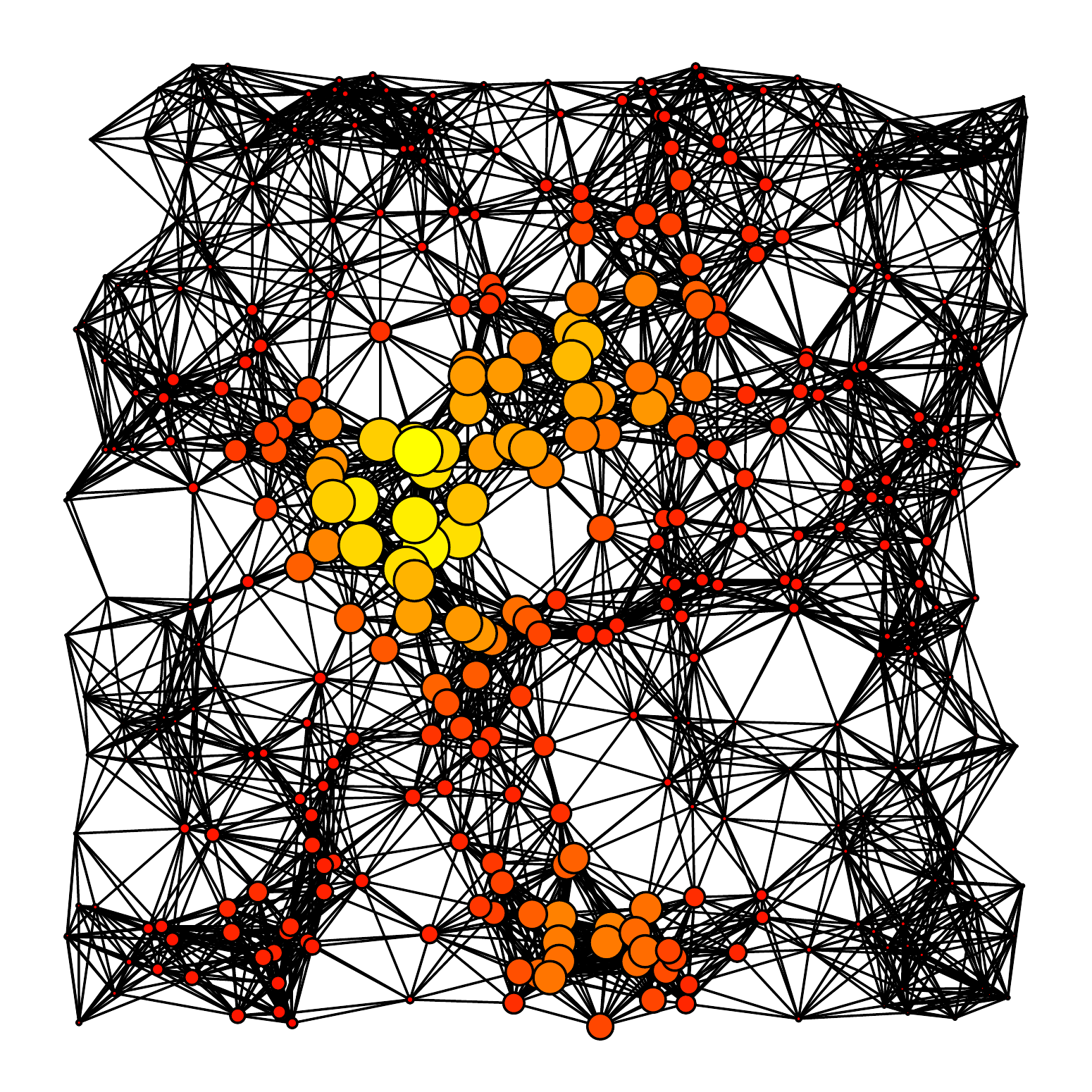}
    \caption{Illustration of random geometric graph with $n = 400$ nodes. The size and color of nodes encodes their total communicability (with lighter colors corresponding to larger values).}
    \label{fig:geom_graph}
\end{figure}
\begin{table}
\caption{Results obtained for random geometric graphs of varying size. ``Kryl.~it.'' refers to number of iterations in Algorithm~\ref{alg:krylov}, while ''HR it.'' refers to number of iterations in~\cite[Algorithm~5.2]{HighamRelton2016}. Entries marked with * indicate that the method did not finish running within a limit of two hours.}
    \label{tab:geom}
\centering
\renewcommand{\arraystretch}{1.5}
\begin{tabular}{l|l|l|l|l|l|l|l|l}
\hline\hline
& \ \ \ \ \ \ \ $n$ & $200$ & $400$ & $800$ & $1600$ & $3200$ & $6400$ & $12800$ \\
\cline{2-9}
& avg.\ deg.\ & 9.88 & 10.0 & 10.4& 10.9 & 11.0 & 11.2 & 11.2 \\
\hline
\multirow{3}{*}{Alg.~\ref{alg:top_p}} & Kryl.~it.\ & 14 & 17 & 18 & 22 & 22 & 23 & 25\\
& HR it.\     & 4 & 2 & 4 & 2 & 3 & 5 & 2 \\
& time    & 0.02s & 0.03s & 0.06s & 0.08s & 0.26s & 0.70s & 0.73s \\
\hline
\multirow{2}{*}{Alg.~from~\cite{DelaCruzCabreraJinNoscheseReichel2021}}  & Kryl.~it.\  & 12.7 & 14.8 & 15.7 & 14.9 & * & * & *\\
& time    & 40s & 216s & 1049s & 5417s & * & * & * \\
\hline
\hline
\end{tabular}
\end{table}
We now perform an experiment in which we use artificially constructed graphs in order to illustrate the scaling behavior of Algorithm~\ref{alg:top_p}. Specifically, we construct a \emph{random geometric graph} by sampling $n$ uniformly distributed points in the unit square and then connecting all pairs with distance below some threshold $d$ by an edge. We vary the size of the graph from $n = 200$ to $n = 12800$ and choose the distance threshold $d$ in dependence on $n$ such that the average degree in the resulting graph is roughly 10 (such that it is sensible to consider all graphs as different-sized instances of the same problem). As an example, the graph resulting for $n = 400$ is depicted in Figure~\ref{fig:geom_graph}. We use the same Krylov accuracies and the same value of $\alpha$ as in the previous experiment. Detailed results are reported in Table~\ref{tab:geom}. We observe that the number of Krylov iterations necessary to satisfy the tolerance requirement slightly increases when increasing the problem size. As expected, the number of iterations in the maximum element estimator is consistently small, in line with theoretical and numerical evidence reported in~\cite{HighamRelton2016}. Precisely, it ranges from 2 to 5, with no clear dependence on the matrix size. Concerning execution times, it is clearly visible that Algorithm~\ref{alg:top_p} indeed scales linearly in the problem size, while the cubic scaling of the method from~\cite{DelaCruzCabreraJinNoscheseReichel2021} leads to enormous run times which exceed two hours for the problem of size $n = 3200$, while Algorithm~\ref{alg:top_p} stays below one second also for the largest problem instance with $n = 12800$.
\end{example}

\begin{example}
\begin{table}
\caption{Number of nodes, number of edges and total communicability of test networks from the SuiteSparse collection.}
    \centering
    \renewcommand{\arraystretch}{1.25}
    \begin{tabular}{l|c|c|c}
    \hline\hline
         \textbf{Network} & $n$ & $|\E|$ & $C^{\TN}$  \\
         \hline
         Pajek/Erdos972 & 5488 & 14170 & $8.4 \cdot 10^8$ \\
         \hline
         Pajek/Erdos982 & 5822 & 14750 & $1.2 \cdot 10^9$ \\
         \hline
         Pajek/Erdos992 & 6100 & 15030 & $1.5 \cdot 10^9$ \\
         \hline
         SNAP/ca-GrQc   & 5242 & 28980 & $4.6 \cdot 10^{21}$ \\
         \hline
         SNAP/ca-HepTh  & 9877 & 51971 & $1.0 \cdot 10^{15}$ \\ 
         \hline
         SNAP/as-735    & 7716 & 26467 & $2.8 \cdot 10^{23}$ \\
    \hline\hline
    \end{tabular}
    \label{tab:suitesparse}
\end{table}
\begin{table}[t]
    \caption{Results obtained for test networks from the SuiteSparse collection (see Table~\ref{tab:suitesparse} for details on their properties). ``Kryl.~it.'' refers to number of iterations in Algorithm~\ref{alg:krylov}, while ''HR it.'' refers to number of iterations in~\cite[Algorithm~5.2]{HighamRelton2016}.}

    \centering
    \renewcommand{\arraystretch}{1.25}
    \begin{tabular}{l|c|c|c|c|c}
    \hline\hline
         \textbf{Network} & $p$ & Kryl.\ it.\ & HR.\ it.\ & time & incr.\ of $C^{\TN}$ \\
         \hline
         \multirow{3}{*}{Pajek/Erdos972} & $10$   & \multirow{3}{*}{$18$} & 3 & 0.21s & 45\%   \\
                                         & $50$   &                       & 3 & 0.78s & 488\%  \\
                                         & $100$  &                       & 3 & 1.66s & 4026\% \\
         \hline
         \multirow{3}{*}{Pajek/Erdos982} & $10$   & \multirow{3}{*}{$18$} & 3 & 0.18s & 43\%   \\
                                         & $50$   &                       & 3 & 0.97s & 408\%  \\         
                                         & $100$  &                       & 3 & 2.08s & 1981\% \\
         \hline
         \multirow{3}{*}{Pajek/Erdos992} & $10$   & \multirow{3}{*}{$18$} & 2 & 0.16s & 43\%   \\
                                         & $50$   &                       & 3 & 0.89s & 470\%  \\         
                                         & $100$  &                       & 3 & 1.82s & 2612\% \\
         \hline
         \multirow{3}{*}{SNAP/ca-GrQc}   & $10$   & \multirow{3}{*}{$16$} & 2 & 0.14s & 33\%   \\
                                         & $50$   &                       & 4 & 1.21s & 95\%   \\         
                                         & $100$  &                       & 2 & 1.37s & 288\%  \\
         \hline
         \multirow{3}{*}{SNAP/ca-HepTh}  & $10$   & \multirow{3}{*}{$18$} & 2 & 0.28s & 26\%    \\
                                         & $50$   &                       & 3 & 1.32s & 532\%   \\         
                                         & $100$  &                       & 2 & 2.00s & 941\%  \\
         \hline
         \multirow{3}{*}{SNAP/as-735}    & $10$   & \multirow{3}{*}{$15$} & 3 & 0.24s & 13\%   \\
                                         & $50$   &                       & 3 & 1.19s & 93\%   \\         
                                         & $100$  &                       & 4 & 3.43s & 224\%  \\
    \hline\hline
    \end{tabular}
    \label{tab:suitesparse_results}
\end{table}
In a last example, we demonstrate the performance of our method on several real-world networks from the SuiteSparse matrix collection (\url{https://sparse.tamu.edu/}) which are frequently used when investigating total communicability; see, e.g.,~\cite{BenziBoito2020,BenziKlymko2013}. We summarize the most important properties of the data set in Table~\ref{tab:suitesparse}. In this experiment, we do not perform a comparison to the original method from~\cite{DelaCruzCabreraJinNoscheseReichel2021}, as the results of the previous experiment already clearly indicate that run times would be extremely high for the network sizes under consideration.

For each of the networks, we perform updates by introducing $p = 10, 50, 100$ virtual edges and report the iteration numbers, run times and the increase of total communicability that is achieved by the update. The parameters of the method are again chosen as in the previous experiments.
The results are given in Table~\ref{tab:suitesparse_results}. We can observe that (as expected) the execution time of the method also scales almost perfectly linear with $p$, with a few exceptions in those cases where a larger number of iterations of Algorithm~\ref{alg:power} is needed for some values of $p$ (e.g., for the SNAP/ca-GrQc network and $p=50$). In all cases, the introduced updates clearly benefit the total communicability substantially, although the ratios by which it increases greatly vary. In particular for the three Erd\H{o}s collaboration networks, enormous increases are achieved for $p = 100$. Still, even the worst result for $p=100$ (the SNAP/as-735 network) more than doubles the total communicability, although the number of newly introduced edges is less than $1\%$ of the number of existing edges. Thus, while we cannot give precise guarantees for how well our updates approximate the ``best'' update with $p$ edges, the results clearly indicate that very good updates are produced also for larger real-world networks.
\end{example}

\section{Some extensions of the network sensitivity concept}\label{sec:sensitivity}
In this section, we first show how the sensitivity concept from Definition~\ref{def:sensitivity} can be extended to subgraph centrality and the Estrada index, and then we brief\/ly discuss how sensitivity with respect to removal (or outage) of certain nodes can be incorporated into the framework. For all considered cases, computational procedures similar to the one introduced in Section~\ref{sec:algorithms} can be derived in a straightforward fashion. We therefore do not go into detail concerning this topic.

\subsection{Sensitivity of subgraph centrality and the Estrada index}\label{subsec:sensitivity_subgraph_centrality} 
The concept of total network sensitivity from~\cite{DelaCruzCabreraJinNoscheseReichel2021} can straightforwardly be extended to the influence of edge modifications on subgraph centrality~\eqref{eq:exponential_subgraph_centrality} and the Estrada index~\eqref{eq:estrada_index} instead of total communicability, yielding the following analogue of Definition~\ref{def:sensitivity}. Sensitivity of subgraph centrality is especially relevant if one wants to judge the influence of network modifications on the centrality of a particular node instead of the communicability of the network as a whole.
\begin{definition}\label{def:sensitivity_subgraph_estrada}
Let $G = (\V, \E, w)$ be a weighted (di)graph with adjacency matrix $A_G \in \Rnn$ and let $E_{ij} := \ve_i\ve_j^T \in \Rnn$. Then, the \emph{sensitivity of subgraph centrality of node $v$} with respect to changes in $w_{ij}$ is defined as
\begin{equation}\label{eq:sensitivity_subgraph}
S^{\SC}_{ij}(v) := \ve_v^TL_{\exp}(A_G, E_{ij})\ve_v
\end{equation}
and the \emph{sensitivity of the Estrada index} with respect to changes in $w_{ij}$ is defined as
\begin{equation}\label{eq:sensitivity_estrada_index}
S^{\EE}_{ij}(A_G) := \trace \left( L_{\exp}(A_G, E_{ij})\right).
\end{equation}
\end{definition}

\begin{remark}\label{rem:estrada_rank1}
Note that of course~\eqref{eq:transposition_invariance} also holds with $\vone$ replaced by $\ve_v$, so that again, for undirected graphs, it is sensible to define sensitivity of subgraph centrality using just rank-one terms.
\end{remark}

The following elementary result (which follows directly from the multivariate chain rule) shows that the quantities defined in Definition~\ref{def:sensitivity_subgraph_estrada} do indeed measure sensitivity of the respective network indices.
\begin{proposition}\label{prop:sensitivity_frechet}
The sensitivity of subgraph centrality and the Estrada index defined in Definition~\ref{def:sensitivity_subgraph_estrada} are the rates of change of the respective quantities w.r.t.\ changes in the weight $w_{ij}$, i.e.,\
\begin{equation*}
    \frac{\partial}{\partial{}w_{ij}} \vc^{\SC}(v; w) = S^{\SC}_{ij}(v) \qquad\text{ and }\qquad \frac{\partial}{\partial{}w_{ij}} EE(A_G; w) = S^{\EE}_{ij}(A_G),
\end{equation*}
where the notations $\vc^{\SC}(\ \cdot\ ; w)$ and $EE(\ \cdot\ ; w)$ are meant to explicitly show the dependence of the corresponding network indices on the underlying weight function.
\end{proposition}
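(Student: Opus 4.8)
The plan is to treat the adjacency matrix as a function $A_G(w_{ij})$ of the single scalar weight $w_{ij}$ and reduce both claims to the chain rule. The key structural observation is that, among all entries of $A_G$, only the $(i,j)$ entry depends on $w_{ij}$, and it equals $w_{ij}$ itself. Hence perturbing the weight by $h$ yields $A_G(w_{ij}+h) = A_G + hE_{ij}$, so that the inner derivative is
\begin{equation*}
\frac{\partial A_G}{\partial w_{ij}} = E_{ij}.
\end{equation*}

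The central step is then to differentiate $\exp(A_G(w_{ij}))$ directly from the definition of the G\^ateaux derivative and identify the result with the Fr\'echet derivative:
\begin{equation*}
\frac{\partial}{\partial w_{ij}}\exp(A_G(w_{ij})) = \lim_{h\to 0}\frac{\exp(A_G + hE_{ij}) - \exp(A_G)}{h} = G_{\exp}(A_G, E_{ij}) = L_{\exp}(A_G, E_{ij}),
\end{equation*}
where the final equality is exactly the identity~\eqref{eq:frechet_gateaux} guaranteeing that the G\^ateaux and Fr\'echet derivatives of the exponential coincide for every direction term. This is where the assumptions about the exponential function are genuinely used.

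The two asserted formulas then follow by composing this matrix-valued derivative with the appropriate linear functional. For subgraph centrality I would apply the (constant, $w$-independent) map $M \mapsto \ve_v^T M \ve_v$ and interchange it with the limit to obtain $\frac{\partial}{\partial w_{ij}}\vc^{\SC}(v;w) = \ve_v^T L_{\exp}(A_G, E_{ij})\ve_v = S^{\SC}_{ij}(v)$; for the Estrada index I would apply $\trace(\cdot)$ and use its linearity to exchange it with the limit, giving $\frac{\partial}{\partial w_{ij}}\EE(A_G;w) = \trace\!\left(L_{\exp}(A_G, E_{ij})\right) = S^{\EE}_{ij}(A_G)$.

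The computation is essentially routine, and I do not expect a genuine obstacle. The only point requiring even a moment of care is the interchange of the scalar limit defining $\partial/\partial w_{ij}$ with the functionals $\ve_v^T(\cdot)\ve_v$ and $\trace(\cdot)$; since these maps are linear and, in finite dimensions, continuous, the interchange is immediately justified. Equivalently, the entire argument can be phrased as one application of the multivariate chain rule to the composition $w_{ij} \mapsto A_G(w_{ij}) \mapsto \exp(A_G) \mapsto \ve_v^T\exp(A_G)\ve_v$ (respectively $\mapsto \trace\exp(A_G)$), with inner derivative $E_{ij}$ and outer derivative the functional itself, which is precisely the framing anticipated by the parenthetical remark preceding the statement.
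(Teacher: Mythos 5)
Your proposal is correct and takes essentially the same approach as the paper: the paper supplies no separate proof, only the parenthetical remark that the result ``follows directly from the multivariate chain rule,'' and your argument---inner derivative $E_{ij}$, identification of the directional derivative with $L_{\exp}(A_G,E_{ij})$ via~\eqref{eq:frechet_gateaux}, and composition with the continuous linear functionals $M \mapsto \ve_v^T M\ve_v$ and $\trace(\cdot)$---is precisely that chain-rule argument made explicit.
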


Next, we give an alternative characterization of the sensitivities introduced above. We again start by introducing a rather general result and then state a corollary for our specific setting, which is similar in spirit to Corollary~\ref{cor:total_sensitivity_simple_formula} for total sensitivity.

\begin{theorem}\label{the:frechet_trace}
Let $A \in \Cnn$ and let $f$ be analytic on a region that contains $\spec(A)$. Then
\begin{equation*}
    \trace(L_{f}(A,E_{ij})) = [f^\prime(A^T)]_{ij}.
\end{equation*}
\end{theorem}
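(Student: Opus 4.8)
The plan is to reduce the trace of the Fréchet derivative to a single entry of an ordinary matrix function, and for this I would exploit the same vectorization machinery used in the proof of Theorem~\ref{the:frechet_eij}. The key observation is that the trace is a linear functional that can be written as $\trace(M) = \sum_{v=1}^n \ve_v^T M \ve_v$. Applying Theorem~\ref{the:frechet_eij} with $\vu = \vv = \ve_v$ to each summand, I would obtain
\begin{equation*}
\trace(L_f(A, E_{ij})) = \sum_{v=1}^n \ve_v^T L_f(A, E_{ij}) \ve_v = \sum_{v=1}^n [L_f(A^T, \ve_v\ve_v^T)]_{ij}.
\end{equation*}
By linearity of the Fréchet derivative in its second argument, the sum over $v$ can be pulled inside, giving $[L_f(A^T, \sum_v \ve_v\ve_v^T)]_{ij} = [L_f(A^T, I)]_{ij}$, since $\sum_v \ve_v\ve_v^T = I$.

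The remaining step is to identify the Fréchet derivative in the direction of the identity matrix with the ordinary derivative $f'$. The plan is to use the integral representation~\eqref{eq:frechet_derivative_integral}: setting $E = I$ yields
\begin{equation*}
L_f(A^T, I) = \frac{1}{2\pi i}\int_\Gamma f(\zeta)(\zeta I - A^T)^{-1}(\zeta I - A^T)^{-1}\d\zeta = \frac{1}{2\pi i}\int_\Gamma f(\zeta)(\zeta I - A^T)^{-2}\d\zeta.
\end{equation*}
Since $\frac{\d}{\d\zeta}(\zeta I - A^T)^{-1} = -(\zeta I - A^T)^{-2}$, this integrand is recognized (up to sign, which is absorbed upon differentiating the Cauchy formula for $f$ under the integral sign) as exactly the Cauchy integral representation of $f'(A^T)$. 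Hence $L_f(A^T, I) = f'(A^T)$, and taking the $(i,j)$ entry completes the argument.

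The main obstacle I anticipate is the directional-derivative identification in the second step: one must justify that differentiating under the Cauchy integral is valid and that the factor $(\zeta I - A^T)^{-2}$ genuinely corresponds to $f'$ rather than to some other combination. An alternative, perhaps cleaner route avoids the integral entirely: the identity $L_f(A, I) = f'(A)$ is a standard fact (it follows immediately from the Gâteaux definition, since $f(A + hI) = f(A) + h f'(A) + o(h)$ whenever $A$ and $I$ commute, because then $f(A+hI)$ reduces to the scalar composition evaluated on the eigenvalues shifted by $h$). Applied at $A^T$, this gives $L_f(A^T, I) = f'(A^T)$ directly. I would likely present this commuting-direction argument as the clean justification and relegate the integral computation to a remark, since the commutativity of $A^T$ with $I$ makes the directional derivative collapse to the ordinary scalar derivative in a completely transparent way. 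The analyticity hypothesis on $f$ guarantees both that $f'$ exists and that the Fréchet derivative itself is well-defined via~\eqref{eq:frechet_derivative_integral}, so no additional regularity assumptions are needed.
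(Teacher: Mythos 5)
Your proposal is correct and follows essentially the same route as the paper's proof: reduce the trace to $[L_f(A^T, I)]_{ij}$ via Theorem~\ref{the:frechet_eij} applied to each diagonal entry together with linearity of the Fr\'echet derivative, then identify $L_f(A^T,I)$ with $f'(A^T)$ through the integral representation~\eqref{eq:frechet_derivative_integral} and the Cauchy formula for the derivative. Your alternative justification of that final step via the G\^ateaux derivative in the commuting direction $I$ is also sound (the Fr\'echet derivative exists by analyticity and hence coincides with the G\^ateaux derivative, as noted in Section~\ref{subsec:matfun}), but it is a variation of a single step rather than a genuinely different proof.
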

\begin{proof}
First note that due to the definition of the trace, Theorem~\ref{the:frechet_eij} and the linearity of the Fr\'echet derivative, we have
\begin{equation}\label{eq:proof_trace1}
\trace(L_{f}(A,E_{ij})) = \sum_{v=1}^n \ve_v^T[L_{f}(A^T,E_{ij})]_{ij}\ve_v = [L_{f}(A^T,\sum_{v=1}^n \ve_v\ve_v^T)]_{ij} = [L_{f}(A^T,I_n)]_{ij},
\end{equation}
where $I_n$ denotes the identity matrix of size $n \times n$. Now, because $f$ is analytic on a region containing $\spec(A)$, we can use the integral formula~\eqref{eq:frechet_derivative_integral} for the Fr\'echet derivative, which gives
\begin{equation}\label{eq:proof_trace2}
    L_f(A^T,I_n) = \frac{1}{2\pi i} \int_\Gamma f(\zeta)(\zeta I - A^T)^{-1}I_n(\zeta I - A^T)^{-1}\d \zeta = \frac{1}{2\pi i} \int_\Gamma f(\zeta)(\zeta I - A^T)^{-2}\d \zeta.
\end{equation}
The right-hand side of~\eqref{eq:proof_trace2} corresponds to the Cauchy integral formula for the derivative of $f$, so that we find
\begin{equation}\label{eq:proof_trace3}
L_f(A^T,I_n) = \frac{1}{2\pi i} \int_\Gamma f(\zeta)(\zeta I - A^T)^{-2}\d \zeta = f^\prime(A^T).
\end{equation}
Inserting~\eqref{eq:proof_trace3} into~\eqref{eq:proof_trace1} concludes the proof.
\end{proof}

\begin{corollary}\label{cor:alternative_characterization_subgraph}
Let $S^{\SC}_{ij}(v)$ and $S^{\EE}_{ij}(A_G)$ denote the sensitivities of subgraph centrality and the Estrada index defined in~\eqref{eq:sensitivity_subgraph}--\eqref{eq:sensitivity_estrada_index}. Then
\begin{equation}\label{eq:alternative_subgraph}
S^{\SC}_{ij}(v) = [L_{\exp}(A_G^T,\ve_v\ve_v^T)]_{ij}.
\end{equation}
and
\begin{equation}\label{eq:alternative_estrada}
    S^{\EE}_{ij}(A_G) = [\exp(A_G^T)]_{ij}.
\end{equation}
\end{corollary}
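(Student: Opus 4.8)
The plan is to obtain both identities by directly specializing the two general results established earlier---Theorem~\ref{the:frechet_eij} and Theorem~\ref{the:frechet_trace}---to the exponential function, exactly in the spirit of how Corollary~\ref{cor:total_sensitivity_simple_formula} was derived from Theorem~\ref{the:frechet_eij}. Neither part should require any genuinely new computation; the whole point of having formulated the two theorems in terms of a general analytic (resp.\ Fr\'echet differentiable) $f$ is that the corollary now follows by plugging in $f = \exp$ and the appropriate vectors.

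For the subgraph centrality sensitivity~\eqref{eq:alternative_subgraph}, I would start from the definition~\eqref{eq:sensitivity_subgraph}, i.e.\ $S^{\SC}_{ij}(v) = \ve_v^T L_{\exp}(A_G, E_{ij})\ve_v$, and apply Theorem~\ref{the:frechet_eij} with $f = \exp$ and the choice $\vu = \vv = \ve_v$. This immediately yields $\ve_v^T L_{\exp}(A_G, E_{ij})\ve_v = [L_{\exp}(A_G^T, \ve_v\ve_v^T)]_{ij}$, which is precisely the claimed formula. Here $\ve_v$ simply plays the role that $\vone$ played in the proof of Corollary~\ref{cor:total_sensitivity_simple_formula}, so the argument is a one-line specialization.

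For the Estrada index sensitivity~\eqref{eq:alternative_estrada}, I would begin from the definition~\eqref{eq:sensitivity_estrada_index}, namely $S^{\EE}_{ij}(A_G) = \trace(L_{\exp}(A_G, E_{ij}))$, and invoke Theorem~\ref{the:frechet_trace} with $f = \exp$. That theorem gives $\trace(L_{\exp}(A_G, E_{ij})) = [\exp'(A_G^T)]_{ij}$. The only remaining observation---and the single point that distinguishes this case from a purely mechanical substitution---is that the exponential function is its own derivative, so $\exp' = \exp$ and therefore $[\exp'(A_G^T)]_{ij} = [\exp(A_G^T)]_{ij}$, completing the identity.

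Since each statement collapses to a single application of an already-proven theorem, I do not anticipate a real obstacle. The only things worth verifying explicitly are the hypotheses: Theorem~\ref{the:frechet_eij} needs $\exp$ to be Fr\'echet differentiable at $A_G$, and Theorem~\ref{the:frechet_trace} needs $\exp$ to be analytic on a region containing $\spec(A_G)$; both hold trivially because the exponential is entire, so the Fr\'echet derivative $L_{\exp}(A_G,\cdot)$ exists for every matrix (as already noted after~\eqref{eq:frechet_gateaux}). I would state this verification in a single sentence rather than belabor it.
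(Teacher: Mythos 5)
Your proposal is correct and follows exactly the paper's own proof: relation~\eqref{eq:alternative_subgraph} is obtained by applying Theorem~\ref{the:frechet_eij} with $f = \exp$ and $\vu = \vv = \ve_v$, and relation~\eqref{eq:alternative_estrada} by applying Theorem~\ref{the:frechet_trace} with $f = \exp$ together with the observation that $\exp' = \exp$. Your brief verification of the hypotheses (the exponential being entire, hence Fr\'echet differentiable at any matrix) is a harmless addition that the paper leaves implicit.
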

\begin{proof}
Relation~\eqref{eq:alternative_subgraph} follows by applying Theorem~\ref{the:frechet_eij} to $f(A) = \exp(A_G)$ and $\vu = \vv = \ve_v$. Similarly,~\eqref{eq:alternative_estrada} follows by applying Theorem~\ref{the:frechet_trace} to $\trace(L_{\exp}(A_G,E_{ij})$, noting that $f = f'$ for $f = \exp$.
\end{proof}

\begin{remark}\label{rem:alternative_estrada}
We brief\/ly comment on formula~\eqref{eq:alternative_estrada} for the sensitivity of the Estrada index, as it reveals a quite curious connection. The entry $[\exp(A_G^T)]_{ij} = [\exp(A_G)]_{ji}$ is determined by the number and lengths of walks in $G$ that start at node $j$ and end at $i$ (for an undirected graph, this quantity was introduced as \emph{communicability} of nodes $i$ and $j$ in~\cite{EstradaHatano2008}). It is quite interesting that this number single-handedly controls how sensitive the Estrada index reacts to modifications in the edge $(i,j)$.
\end{remark}
\begin{figure}
    \centering
    \includegraphics[width=.9\textwidth]{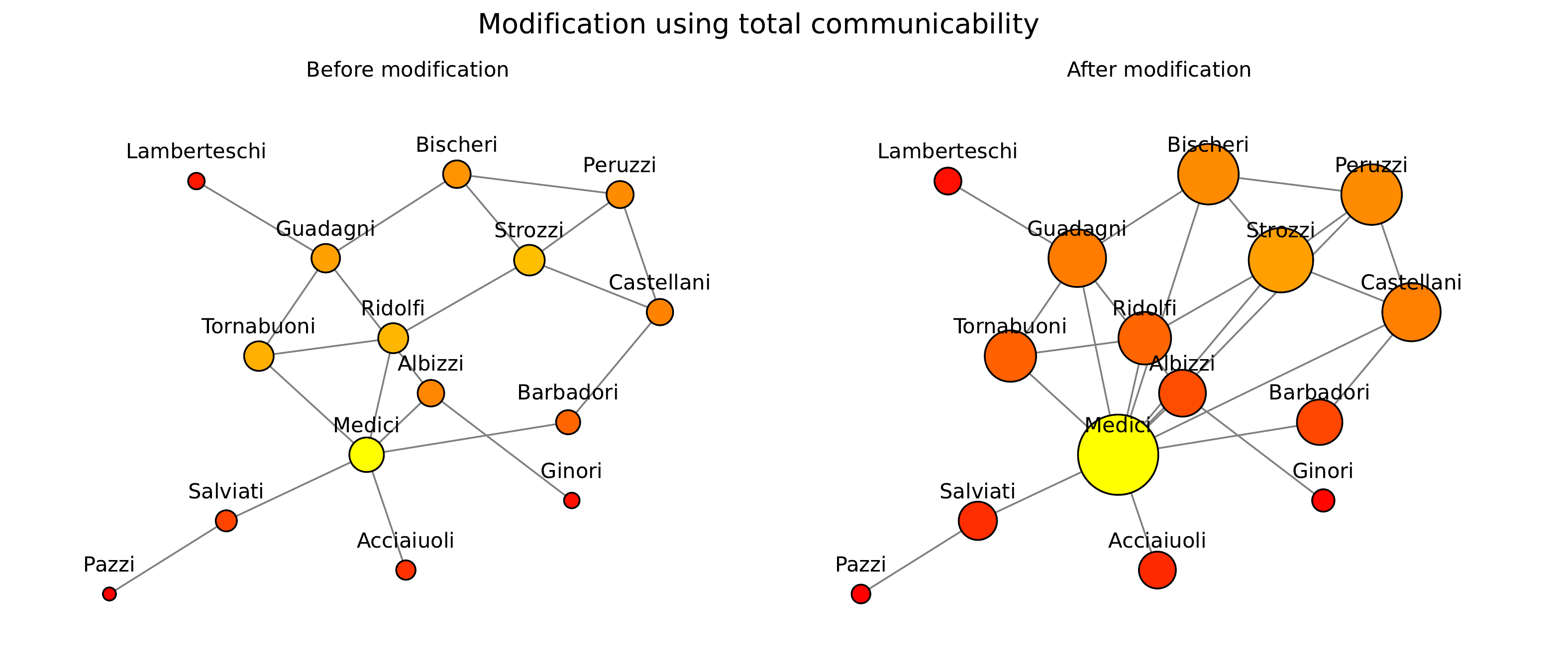}
    
    \vspace{.6cm}
    \includegraphics[width=.9\textwidth]{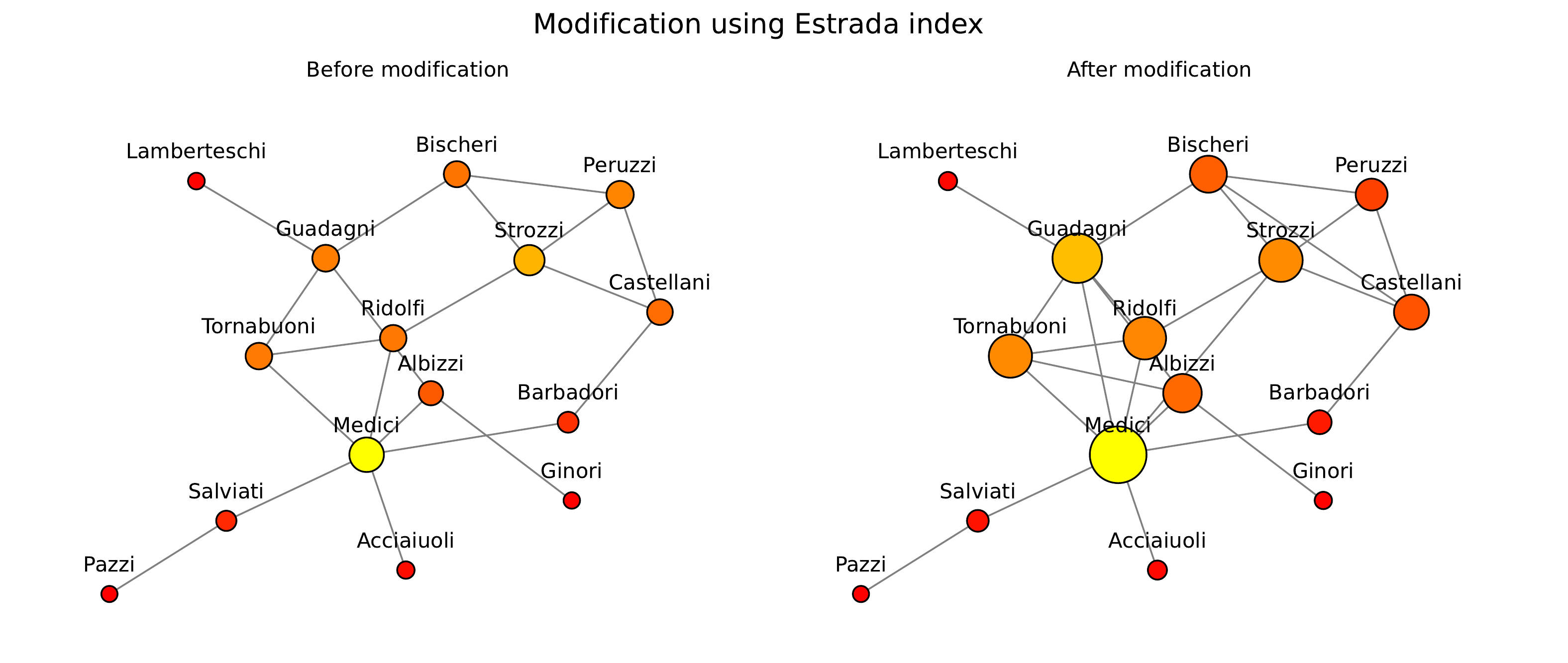}
    \caption{Florentine family network before and after edge modification. \emph{Top row:} Edge modification according to total sensitivity $S_{ij}^{\tn}$. The size and coloring of the nodes indicate the total communicability of each node before (left) and after the modification (right), with lighter colors indicating higher values. \emph{Bottom row:} Edge modification according to sensitivity of the Estrada index $S_{ij}^{\EE}$. The size and coloring of the nodes indicate the subgraph centrality of each node before (left) and after the modification (right). Note that we deliberately do not use the same color scale in the left and right plots, but instead have the color be determined by the \emph{relative} importance of the node in its specific network.}
    \label{fig:florentine_families}
\end{figure}
\begin{table}
\caption{Virtual edges $(i,j)$ with highest total sensitivity $S_{ij}^{\tn}$ and highest sensitivity of the Estrada index $S_{ij}^{\EE}$ for the Florentine family network.}
\centering
\renewcommand{\arraystretch}{1.25}
\setlength{\tabcolsep}{10pt}
\begin{tabular}{l|c||l|c}
\hline\hline
\multicolumn{2}{c||}{\textbf{Total sensitivity}} & \multicolumn{2}{c}{\textbf{Estrada index}} \\
\hline
Edge & $S_{ij}^{\tn}$  & Edge & $S_{ij}^{\EE}$\\
\hline
    Medici -- Strozzi & 42.22 & Medici -- Guadagni & 2.73 \\
    Medici -- Guadagni & 39.40 & Bischeri -- Castellani & 2.46 \\
    Medici -- Bischeri & 36.20 & Tornabuoni -- Albizzi & 2.36 \\
    Medici -- Peruzzi & 35.33 & Medici -- Strozzi & 2.10 \\
    Medici -- Castellani & 34.26 & Guadagni -- Ridolfi & 2.02\\
\hline\hline
\end{tabular}
\label{tab:florentine_families}
\end{table}  
We now illustrate on a small example that it is indeed worthwhile to consider sensitivity of subgraph centrality instead of total sensitivity, as it can give quite different results. 

\begin{example}\label{ex:florentine_families}
We consider a simple unweighted, undirected example network taken from~\cite{BreigerPattison1986}; see also~\cite{BenziBoito2020}. The network consists of 15 nodes, representing Florentine families in the 15th century, and 20 edges, representing marriages between the families. 

Assume we want to add five edges to the network with the goal to increase its communicability as much as possible. To find suitable edges, we compute the sensitivities $S^{\tn}_{ij}$ and $S^{\EE}_{ij}$ for all virtual edges $(i,j)$ and then add the five edges with highest sensitivity to the network. Depending on whether we use total sensitivity or sensitivity of the Estrada index, different edges are selected. The results are summarized in Table~\ref{tab:florentine_families}. Interestingly, the approach based on total communicability selects five edges all involving the Medici family (which is the family with by far highest total communicability), while the approach based on the Estrada index selects only two edges involving Medici and three edges involving other families, thus leading to a more ``balanced'' update of the network. Figure~\ref{fig:florentine_families} depicts the network, the communicability scores of the nodes and the updates resulting from both approaches.

This example clearly illustrates that network modifications based on total communicability and subgraph centrality can lead to quite different results. Which result is more appropriate for efficiently updating the network at hand of course depends on the specific application. For the above example, one could argue that the ``balanced'' update obtained by using the Estrada index might be preferable to the ``Medici-focused'' update (e.g., because there might be too few Medici descendants for so many marriages).
\end{example}

\subsection{Network sensitivity with respect to node removal}\label{subsec:node_removal}
When investigating robustness of networks, another modification that is certainly of interest besides edge addition/removal is the \emph{removal of a node} from the network. This modification fits into our framework by modeling it as the removal of all in-/outgoing edges of the node, isolating it from the rest of the network.

For easier notation, we denote the $v$th row and column of $A_G$ by $\va_{v:} := \ve_v^TA_G$ and $\va_{:v} := A_G\ve_v$, respectively, and define 
\begin{equation}\label{eq:modification_node_removal}
A_G(h) = A_G - hE_v \quad\text{ where } E_v = -(\ve_v \va_{v:} + \va_{:v}\ve_v^T).
\end{equation}

This way, $A_G(0) = A_G$, while $A_G(1)$ is the adjacency matrix of the graph in which all edges incident to node $v$ are removed and all other edges remain unchanged. Thus, in light of~\eqref{eq:modification_node_removal}, it is natural to define the sensitivities with respect to removal of node $v$ via
\begin{align}\label{eq:network_sensitivity_nodes}
S^{\tn}_{v}(A_G) &:= \vone^T L_{\exp}(A_G, E_v) \vone, \nonumber\\
S^{\SC}_{v}(u) &:= \ve_u^T L_{\exp}(A_G, E_v) \ve_u, \\
S^{\EE}_{v}(A_G) &:= \trace(L_{\exp}(A_G, E_v)). \nonumber
\end{align}

Clearly, $E_v$ is a linear combination of those $E_{iv}$ and $E_{vj}$ for which $(i,v) \in \E$ or $(v,j) \in \E$. By the linearity of the Fr\'echet derivative, analogous relations to those in Corollary~\ref{cor:total_sensitivity_simple_formula} and Corollary~\ref{cor:alternative_characterization_subgraph} therefore also hold for the measures~\eqref{eq:network_sensitivity_nodes}, characterizing them as the (possibly weighted) sum of a few entries of the matrices in~\eqref{eq:total_sensitivity_simple_formula},~\eqref{eq:alternative_subgraph} and~\eqref{eq:alternative_estrada}, respectively. We refrain from explicitly stating all of the corresponding formulas, as they are completely straightforward. Just as an example, for sensitivity of total communicability, we find
\begin{align*}
S_v^{\TN}(A_G) &= \sum_{(i,v) \in \E} S_{i,v}^{\TN}(A_G) + \sum_{(v,j) \in \E} S_{v,j}^{\TN}(A_G) \\
               &= \sum_{(i,v) \in \E} [L_{\exp}(A_G^T,\vone\vone^T)]_{iv} + \sum_{(v,j) \in \E} [L_{\exp}(A_G^T,\vone\vone^T)]_{vj},
\end{align*}
which can also be evaluated by forming just a single Fr\'echet derivative.

\begin{remark}\label{rem:centrality_measure}
The sensitivity measures~\eqref{eq:network_sensitivity_nodes} can be used for analyzing robustness of a network with respect to outages, targeted attacks etc.\ of certain nodes. This might be particularly interesting for directed networks: in contrast to undirected networks, it is rather difficult to find a good approach for assigning a single centrality score to a node. For example, it turns out that the diagonal entries of $\exp(A_G)$ do not need to carry any meaningful information in the directed case; see, e.g.,~\cite{BenziEstradaKlymko2013}. Instead, it is more appropriate to assign two scores to a node, one which measures the importance as \emph{broadcaster} and one which measures the importance as \emph{receiver} of information. These measures give information  on the nature of information flow and the \emph{roles} of nodes in the network, but they do not straightforwardly answer questions about vulnerability of the network to outage of certain nodes. For example, it is not clear whether a network would be most strongly affected by the removal of an important broadcaster, an important receiver, or a node which is not at the top of any of the two categories, but takes on both roles reasonably well. For this question, the measures~\eqref{eq:network_sensitivity_nodes}, in particular $S_{v}^{\TN}(A_G)$, could thus potentially yield meaningful additional information. 
\end{remark}

\section{Decay bounds for the Fr\'echet derivative and a priori bounds for sensitivity to edge or node modifications}\label{sec:decay_bounds}

It is well-known that the entries of matrix functions $f(A)$ often exhibit an exponential or even super-exponential decay away from the sparsity pattern of $A$: The larger the geodesic distance $d(u,v)$ of node $u$ and $v$ in the graph of $A$, the smaller the entry $[f(A)]_{uv}$ can be expected to be. This was first studied for the inverse of banded matrices in~\cite{DemkoMossSmith1984} and later extended to other functions and matrices with more general sparsity pattern in numerous works; see, e.g.,~\cite{BenziRazouk2007, Benzi2016, BenziRinelli2022, BenziSimoncini2015, FrommerSchimmelSchweitzer2018_1, FrommerSchimmelSchweitzer2018_2, PozzaSimoncini2021, Schimmel2019, Schweitzer2022a} and the references therein; Specifically, in~\cite{PozzaTudisco2018} such an approach was applied in the context of network modifications.

\subsection{Sparsity structure of Fr\'echet derivatives}\label{subsec:sparsity_frechet}
In this section, we investigate decay properties of the Fr\'echet derivative $L_f(A, E)$, when the direction term is of the special form $E = E_{ij}$ or $E = E_v$ considered in this work.

In order to apply techniques similar to those often used for proving decay in $f(A)$, we start by investigating the sparsity pattern of Fr\'echet derivatives of polynomial matrix functions. In the following, we denote the set of all polynomials of degree at most $m$ by $\Pi_m$. The following elementary result forms the basis of our derivations.
\begin{proposition}\label{prop:frechet_derivative_polynomial}
Let $A, E \in \Rnn$ and let $p_m(z) = \sum_{k=0}^m \alpha_k z^k \in \Pi_m$. Then the Fr\'echet derivative of $p_m$ at $A$ in direction $E$ is given by
\begin{equation}\label{eq:frechet_derivative_polynomial}
    L_{p_m}(A, E) = \sum\limits_{k=1}^m \alpha_k \sum\limits_{\ell=1}^k A^{\ell-1}EA^{k-\ell}.
\end{equation}
\end{proposition}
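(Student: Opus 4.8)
The plan is to exploit the linearity of the Fr\'echet derivative in order to reduce the statement to the case of a single monomial, and then to establish the monomial formula by a short induction. Since $p_m(z) = \sum_{k=0}^m \alpha_k z^k$ and the map $f \mapsto L_f(A,\cdot)$ is linear (which follows directly from the defining limit, or from the integral representation~\eqref{eq:frechet_derivative_integral}), I would first write $L_{p_m}(A,E) = \sum_{k=0}^m \alpha_k L_{z^k}(A,E)$. The constant term $k=0$ contributes nothing, so it remains only to show that $L_{z^k}(A,E) = \sum_{\ell=1}^k A^{\ell-1} E A^{k-\ell}$ for each $k \geq 1$; summing over $k$ then yields the claimed double sum starting at $k=1$.

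For the monomial I see two equally short routes. The first is a direct expansion: writing $(A+E)^k$ as the sum over all length-$k$ words in the noncommuting symbols $A$ and $E$, the single word with no $E$ gives $A^k$, the words containing exactly one $E$ (placed after $\ell-1$ copies of $A$, for $\ell = 1,\dots,k$) give $\sum_{\ell=1}^k A^{\ell-1} E A^{k-\ell}$, and every remaining word contains at least two factors $E$ and is therefore bounded in norm by a constant times $\norm{E}^2$ via submultiplicativity. Hence $(A+E)^k - A^k = \sum_{\ell=1}^k A^{\ell-1} E A^{k-\ell} + O(\norm{E}^2)$, and since the displayed sum is linear in $E$, uniqueness of the Fr\'echet derivative identifies it as $L_{z^k}(A,E)$. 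The second route is an induction on $k$ using the product rule $L_{fg}(A,E) = L_f(A,E)\,g(A) + f(A)\,L_g(A,E)$ applied to $z^k = z \cdot z^{k-1}$, giving the recursion $L_{z^k}(A,E) = E A^{k-1} + A\, L_{z^{k-1}}(A,E)$; a reindexing in the inductive step then collapses this to the stated sum, with base case $L_z(A,E) = E = A^0 E A^0$.

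I expect no genuine obstacle here, as the result is elementary; the only points requiring a modicum of care are bookkeeping ones. In the direct-expansion route I must justify that the quadratic-and-higher remainder is genuinely $o(\norm{E})$ (in fact $O(\norm{E}^2)$) and then appeal to uniqueness of the Fr\'echet derivative to conclude that the extracted first-order term \emph{is} the derivative rather than merely a candidate for it. In the inductive route the only subtlety is the index shift $\ell \mapsto \ell+1$ when combining the boundary term $E A^{k-1} = A^0 E A^{k-1}$ with $A \sum_{\ell=1}^{k-1} A^{\ell-1} E A^{k-1-\ell} = \sum_{\ell=2}^{k} A^{\ell-1} E A^{k-\ell}$, and I would verify that these fit together to cover exactly $\ell = 1,\dots,k$. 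I would likely present the direct-expansion argument, as it makes the combinatorial origin of the double sum transparent and avoids invoking the product rule as a separate lemma.
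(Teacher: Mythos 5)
Your proposal is correct, but it is genuinely different from what the paper does: the paper gives no argument at all for this proposition, disposing of it in one line as ``a special case of~\cite[Problem~3.6]{Higham2008}''. Your proof is essentially the solution to that exercise, written out in full. Both of your routes are sound: the reduction to monomials via linearity of $f \mapsto L_f(A,\cdot)$ is legitimate, the remainder estimate in the direct-expansion route works because there are only finitely many words in $(A+E)^k$ containing at least two factors $E$, each of norm at most $\norm{A}^{k-j}\norm{E}^j$ with $j \geq 2$, so the remainder is $O(\norm{E}^2) = o(\norm{E})$ and uniqueness of the Fr\'echet derivative identifies the linear term; and the index shift in the inductive route combines $EA^{k-1}$ with $\sum_{\ell=2}^{k} A^{\ell-1}EA^{k-\ell}$ exactly as you say. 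What the paper's citation buys is brevity and deference to a standard reference; what your proof buys is self-containedness, and your preferred direct-expansion argument has the additional merit of exposing precisely the structure --- a single $E$ sandwiched between powers of $A$ --- that the paper exploits immediately afterwards in the sparsity arguments of Lemma~\ref{lem:sparsity_pattern_polynomial} and Lemma~\ref{lem:sparsity_pattern_polynomial_nodes}, so including it would arguably improve the exposition.
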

\begin{proof}
The result is, e.g., a special case of~\cite[Problem~3.6]{Higham2008}.
\end{proof}

With help of Proposition~\ref{prop:frechet_derivative_polynomial}, we can prove the following result about the nonzero structure of $L_{p_m}(A, E_{ij})$. 

\begin{lemma}\label{lem:sparsity_pattern_polynomial}
Let $p_m \in \Pi_m$, let $A \in \Rnn$ and let $E_{ij} = \ve_i\ve_j^T \in \Rnn$. Then
\begin{equation*}
    [L_{p_m}(A, E_{ij})]_{uv} = 0 \quad\text{ if } d(u,i) + d(j,v) \geq m.
\end{equation*}
\end{lemma}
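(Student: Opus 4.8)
The plan is to use the explicit formula for the Fr\'echet derivative of a polynomial from Proposition~\ref{prop:frechet_derivative_polynomial} and then track which entries of each summand can possibly be nonzero. Recall that
\[
L_{p_m}(A, E_{ij}) = \sum_{k=1}^m \alpha_k \sum_{\ell=1}^k A^{\ell-1}E_{ij}A^{k-\ell}.
\]
Since $E_{ij} = \ve_i\ve_j^T$, each inner term factors nicely: $A^{\ell-1}\ve_i\ve_j^T A^{k-\ell} = (A^{\ell-1}\ve_i)(\ve_j^T A^{k-\ell}) = (A^{\ell-1}\ve_i)(({A^T})^{k-\ell}\ve_j)^T$. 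The first step is therefore to read off the $(u,v)$ entry of this rank-one term as
\[
[A^{\ell-1}E_{ij}A^{k-\ell}]_{uv} = [A^{\ell-1}]_{ui}\,[A^{k-\ell}]_{jv}.
\]

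Next I would invoke the standard graph-theoretic interpretation of powers of the adjacency matrix: $[A^{s}]_{ab} \neq 0$ only if there is a walk of length exactly $s$ from $a$ to $b$ in the graph of $A$, which in particular forces $s \geq d(a,b)$. Applying this to both factors, $[A^{\ell-1}]_{ui}$ can be nonzero only if $\ell - 1 \geq d(u,i)$, and $[A^{k-\ell}]_{jv}$ can be nonzero only if $k - \ell \geq d(j,v)$. Adding these two inequalities gives that the product $[A^{\ell-1}]_{ui}[A^{k-\ell}]_{jv}$ can be nonzero only if $k - 1 = (\ell-1) + (k-\ell) \geq d(u,i) + d(j,v)$, hence only if $k \geq d(u,i) + d(j,v) + 1 > d(u,i) + d(j,v)$.

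The conclusion then follows by summing: every term in the double sum with $k \leq m$ contributes zero to $[L_{p_m}(A,E_{ij})]_{uv}$ whenever $d(u,i) + d(j,v) \geq m$, because the nonvanishing condition $k > d(u,i)+d(j,v) \geq m$ would require $k > m$, contradicting $k \leq m$. I do not anticipate a serious obstacle here; the main point requiring care is the asymmetry in the distances, namely that the relevant quantity is $d(u,i) + d(j,v)$ rather than something symmetric. This arises precisely because $E_{ij}$ is directed ($\ve_i$ on the left, $\ve_j^T$ on the right), so the left factor measures a walk \emph{from} $u$ \emph{to} $i$ while the right factor measures a walk \emph{from} $j$ \emph{to} $v$; in the general directed case one must be mindful that $d(\cdot,\cdot)$ is not symmetric and keep the ordering of arguments consistent with the walk directions encoded by $A$ and $A^T$.
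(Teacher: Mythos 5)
Your proof is correct and follows essentially the same route as the paper's: both expand $L_{p_m}(A,E_{ij})$ via Proposition~\ref{prop:frechet_derivative_polynomial}, factor the $(u,v)$ entry of each summand as $[A^{\ell-1}]_{ui}[A^{k-\ell}]_{jv}$, and use the walk-length characterization of powers of $A$ to show the two distance inequalities cannot hold simultaneously when $d(u,i)+d(j,v)\geq m$. Your handling of all $k\leq m$ uniformly (rather than the paper's shortcut of reducing to $k=m$) and your remark on the directedness of the distances are fine refinements, not a different argument.
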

\begin{proof}
We begin by recalling that $[A^\ell]_{rs} = 0$ if $d(r,s) > \ell$. Now consider formula~\eqref{eq:frechet_derivative_polynomial} for the special case $E = E_{ij}$, giving
\begin{eqnarray*}
   [L_{p_m}(A, E_{ij})]_{uv} &=& \ve_u^T\left(\sum\limits_{k=1}^m \alpha_k \sum\limits_{\ell=1}^k A^{\ell-1}E_{ij}A^{k-\ell}\right)\ve_v \nonumber\\
   &=& \sum\limits_{k=1}^m \alpha_k \sum\limits_{\ell=1}^k \ve_u^TA^{\ell-1}\ve_i\ve_j^TA^{k-\ell}\ve_v\nonumber\\
   &=& \sum\limits_{k=1}^m \alpha_k \sum\limits_{\ell=1}^k [A^{\ell-1}]_{ui}[A^{k-\ell}]_{jv}.
\end{eqnarray*}
It suffices to consider the term for $k = m$. A term in the inner sum can only be nonzero if both $[A^{\ell-1}]_{ui}$ and $[A^{m-\ell}]_{jv}$ are nonzero, i.e., if $d(u,i) \leq \ell-1$ and $d(j,v) \leq m-\ell$. If $d(u,i) + d(j,v) \geq m$, these two inequalities cannot both be satisfied at the same time for any $\ell$, so that all terms appearing in the sum are zero. The assertion of the lemma directly follows.
\end{proof}

Similarly, the special structure of the direction term $E_v$ from~\eqref{eq:modification_node_removal} allows to conclude about the sparsity pattern of the Fr\'echet derivative $L_{p_m}(A,E_v)$.

\begin{lemma}\label{lem:sparsity_pattern_polynomial_nodes}
Let $p_m \in \Pi_m$, let $A \in \Rnn$ and let $E_v = -(\ve_v \va_{v:} + \va_{:v}\ve_v^T) \in \Rnn$. Then
\begin{equation*}
    [L_{p_m}(A, E_v)]_{u_1u_2} = 0 \quad\text{ if } d(u_1,v) + d(v,u_2) \geq m+1.
\end{equation*}
\end{lemma}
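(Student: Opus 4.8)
The plan is to reduce this claim to the already-proven Lemma~\ref{lem:sparsity_pattern_polynomial} by writing the rank-two direction term $E_v$ as a linear combination of elementary direction terms $E_{ij}$ and then invoking linearity of the Fr\'echet derivative. First I would note that $\ve_v\va_{v:} = \ve_v\ve_v^TA = \sum_{(v,j)\in\E} w_{vj}\,E_{vj}$, since the nonzero entries of the row $\va_{v:}$ sit exactly at the out-neighbors $j$ of $v$, and symmetrically $\va_{:v}\ve_v^T = A\ve_v\ve_v^T = \sum_{(i,v)\in\E} w_{iv}\,E_{iv}$, whose nonzero entries sit at the in-neighbors $i$ of $v$. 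Hence $E_v$ is a linear combination of the ``outgoing'' terms $E_{vj}$ and the ``incoming'' terms $E_{iv}$, and by linearity of $L_{p_m}(A,\cdot)$ the Fr\'echet derivative $L_{p_m}(A,E_v)$ is the corresponding linear combination of $L_{p_m}(A,E_{vj})$ and $L_{p_m}(A,E_{iv})$.

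It then suffices to show that \emph{each} of these summands vanishes at $(u_1,u_2)$ whenever $d(u_1,v)+d(v,u_2)\geq m+1$. For an outgoing term $E_{vj}$ with $(v,j)\in\E$, Lemma~\ref{lem:sparsity_pattern_polynomial} (applied with $i$ replaced by $v$) guarantees $[L_{p_m}(A,E_{vj})]_{u_1u_2}=0$ as soon as $d(u_1,v)+d(j,u_2)\geq m$. The key step is a directed triangle inequality: because $(v,j)\in\E$ and there are no self-loops we have $d(v,j)=1$, so $d(v,u_2)\leq d(v,j)+d(j,u_2)=1+d(j,u_2)$, giving $d(j,u_2)\geq d(v,u_2)-1$. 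Under the hypothesis this yields $d(u_1,v)+d(j,u_2)\geq d(u_1,v)+d(v,u_2)-1\geq m$, which forces the outgoing summand to vanish. For an incoming term $E_{iv}$ with $(i,v)\in\E$, Lemma~\ref{lem:sparsity_pattern_polynomial} gives vanishing when $d(u_1,i)+d(v,u_2)\geq m$, and the mirror-image estimate $d(u_1,v)\leq d(u_1,i)+d(i,v)=d(u_1,i)+1$, i.e.\ $d(u_1,i)\geq d(u_1,v)-1$, again produces $d(u_1,i)+d(v,u_2)\geq m$. Since every summand vanishes, so does their linear combination, which is exactly $[L_{p_m}(A,E_v)]_{u_1u_2}$.

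I do not expect a genuine obstacle here; the argument is essentially bookkeeping on top of Lemma~\ref{lem:sparsity_pattern_polynomial}. The only point requiring care is the directionality of the geodesic distances: one must apply the triangle inequality on the \emph{right} (through $j$) for outgoing edges and on the \emph{left} (through $i$) for incoming edges, and must use the no-self-loop assumption to conclude $d(v,j)=d(i,v)=1$ rather than merely $\leq 1$. Once these two estimates are lined up correctly, the asymmetric ``$m+1$'' threshold (as opposed to the ``$m$'' threshold in Lemma~\ref{lem:sparsity_pattern_polynomial}) emerges naturally from the single extra edge that separates $v$ from its neighbors.
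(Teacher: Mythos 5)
Your proof is correct, but it takes a genuinely different route from the paper's. The paper also begins by splitting $E_v$ via linearity, but only into the two rank-one pieces $\ve_v\va_{v:}$ and $\va_{:v}\ve_v^T$; it then substitutes these directly into the polynomial formula of Proposition~\ref{prop:frechet_derivative_polynomial} and repeats the walk-counting computation of Lemma~\ref{lem:sparsity_pattern_polynomial}, obtaining sums of terms of the form $[A^{\ell-1}]_{u_1v}[A^{k-\ell+1}]_{vu_2}$ and $[A^{\ell}]_{u_1v}[A^{k-\ell}]_{vu_2}$, where the shifted exponents (summing to $k \leq m$ rather than $k-1$) transparently produce the ``$m+1$'' threshold. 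You instead decompose one step further, into the elementary edge terms $E_{vj}$ and $E_{iv}$, and then invoke Lemma~\ref{lem:sparsity_pattern_polynomial} as a black box together with a directed triangle inequality. Your version buys modularity (no re-derivation of the sums; the node result is seen to follow formally from the edge result), while the paper's buys a self-contained computation that needs neither the neighbor-by-neighbor decomposition nor any metric reasoning on the graph. One small correction to your closing remark: the no-self-loop assumption is not actually needed. Your estimate only requires $d(v,j)\leq 1$, since $d(v,u_2) \leq d(v,j)+d(j,u_2)$ gives $d(j,u_2) \geq d(v,u_2)-d(v,j) \geq d(v,u_2)-1$ whenever $d(v,j)\leq 1$; even a hypothetical self-loop ($d(v,j)=0$) would only strengthen this bound, and the same holds on the incoming side.
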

\begin{proof}
As the Fr\'echet derivative is linear in its second argument, we have 
\begin{equation}\label{eq:frechet_linearity}
[L_{p_m}(A, E_v)]_{u_1u_2} = - L_{p_m}(A, \ve_v \va_{v:}) - L_{p_m}(A, \va_{:v}\ve_v^T).
\end{equation}
In particular, $L_{p_m}(A, E_v)]_{u_1u_2}$ is zero when both individual terms on the right-hand side of~\eqref{eq:frechet_linearity} are zero. Using Proposition~\ref{prop:frechet_derivative_polynomial} and proceeding analogously to the proof of Lemma~\ref{lem:sparsity_pattern_polynomial}, we find
\begin{equation}\label{eq:frechet_derivative_polynomial_Ei1}
 [L_{p_m}(A, \ve_v \va_{v:})]_{u_1u_2} = \sum\limits_{k=1}^m \alpha_k \sum\limits_{\ell=1}^k [A^{\ell-1}]_{u_1v}[A^{k-\ell+1}]_{vu_2}
\end{equation}
and 
\begin{equation}\label{eq:frechet_derivative_polynomial_Ei2}
 [L_{p_m}(A, \va_{:v}\ve_v^T)]_{u_1u_2} = \sum\limits_{k=1}^m \alpha_k \sum\limits_{\ell=1}^k [A^{\ell}]_{u_1v}[A^{k-\ell}]_{vu_2}. 
\end{equation}
Clearly, when $d(u_1,v) + d(v,u_2) \geq m+1$, then all terms appearing in the sums in~\eqref{eq:frechet_derivative_polynomial_Ei1} and~\eqref{eq:frechet_derivative_polynomial_Ei2} are zero, from which the assertion of the lemma follows.
\end{proof}

\subsection{A priori bounds for network sensitivity}
Using the results from Section~\ref{subsec:sparsity_frechet}, together with a recent result from~\cite{CrouzeixKressner2020}, we can obtain bounds for the entries of $L_f(A_G, E_{ij})$ and $L_f(A_G, E_v)$ from best polynomial approximation of $f^\prime$ on the numerical range (or field of values) of $A_G$,
\begin{equation*}
    W(A_G) = \{ \vx^T\!\!A_G \vx : \|\vx\| = 1\}.
\end{equation*}

\begin{theorem}\label{the:decay_derivative}
Let $A_G \in \Rnn, E_{ij} = \ve_i\ve_j^T \in \Rnn$ and denote by $W(A_G)$ the numerical range of $A_G$. Then
\begin{equation*}
| [L_f(A_G, E_{ij})]_{uv} | \leq C\cdot \min_{p \in \Pi_{m(u,v)-1}}\ \max_{z \in W(A_G)} |f^\prime(z) - p(z)|
\end{equation*}
where $m(u,v) = d(u,i) + d(j,v)$ and
\begin{equation}\label{eq:C}
    C = \begin{cases}
    1 & \text{if $A$ is normal,} \\
    \left(1+\sqrt{2}\right)^2 & \text{otherwise.} \\
    \end{cases}
\end{equation}
\end{theorem}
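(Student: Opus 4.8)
The plan is to combine the sparsity result of Lemma~\ref{lem:sparsity_pattern_polynomial} with the recent polynomial-approximation bound from~\cite{CrouzeixKressner2020} for matrix functions on the numerical range. The key observation is that for any polynomial $p \in \Pi_{m(u,v)-1}$, the entry $[L_p(A_G, E_{ij})]_{uv}$ vanishes: indeed, applying Lemma~\ref{lem:sparsity_pattern_polynomial} with degree bound $m(u,v)-1$, the hypothesis $d(u,i)+d(j,v) = m(u,v) \geq m(u,v)-1$ is satisfied, forcing the entry to be zero. This is the structural heart of the argument, and it lets me insert an arbitrary low-degree polynomial ``for free'' into the estimate.

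The second ingredient is the relationship between the Fr\'echet derivative of $f$ and the ordinary derivative $f'$. First I would establish that for the \emph{integrand}, the Fr\'echet derivative $L_f$ depends on $f$ only through its behavior near the spectrum, and crucially that replacing $f$ by $f - q$ for a polynomial $q$ changes $L_f$ by $L_q$, by linearity of the Fr\'echet derivative in the function argument. The degree accounting must be handled carefully: if $q \in \Pi_{m(u,v)}$ then $L_q(A_G, E_{ij})$ involves terms $A^{\ell-1} E_{ij} A^{k-\ell}$ with $k \le m(u,v)$, and the $[u,v]$ entry vanishes when $d(u,i)+d(j,v) \ge m(u,v)$; so I would choose the approximating polynomial for $f'$ to have degree $m(u,v)-1$ and track how this propagates through to a statement about $f$ versus its antiderivative. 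The cleanest route is to write $[L_f(A_G,E_{ij})]_{uv} = [L_{f-q}(A_G,E_{ij})]_{uv}$ for a suitable $q$ and then bound the right-hand side.

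To bound $[L_{f-q}(A_G, E_{ij})]_{uv}$, I would invoke the Crouzeix--Kressner-type estimate, which controls $\|L_g(A_G, E)\|$ (and hence individual entries, since $|[L_g(A_G,E)]_{uv}| = |\ve_u^T L_g(A_G,E)\ve_v| \le \|L_g(A_G, E)\|$ with $\|E_{ij}\|=1$) by a constant times $\max_{z \in W(A_G)} |g'(z)|$, where $g = f - q$ so that $g' = f' - q'$. The constant $C$ in~\eqref{eq:C} is exactly the field-of-values constant appearing there, taking value $1$ in the normal case and $(1+\sqrt 2)^2$ in general. Choosing $q$ so that $q'$ ranges over all of $\Pi_{m(u,v)-1}$ as $q$ ranges over $\Pi_{m(u,v)}$, and then taking the infimum over $q'$, converts the bound into the best-approximation form $\min_{p \in \Pi_{m(u,v)-1}} \max_{z \in W(A_G)} |f'(z) - p(z)|$ asserted in the theorem.

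The main obstacle I anticipate is the precise interface between the degree-counting in Lemma~\ref{lem:sparsity_pattern_polynomial} and the derivative-level approximation. The lemma is phrased in terms of $L_p(A_G, E_{ij})$ for a polynomial $p$ approximating $f$ itself, whereas the final bound is stated in terms of approximating $f'$; so I must verify that annihilating the $[u,v]$ entry by subtracting a degree-$m(u,v)$ polynomial $q$ from $f$ corresponds exactly to approximating $f'$ by the degree-$(m(u,v)-1)$ polynomial $q'$, with no off-by-one slippage in the degree and with the map $q \mapsto q'$ being onto $\Pi_{m(u,v)-1}$ (which it is, up to the harmless loss of the constant term of $q$ that does not affect $L_q$ anyway since $L$ annihilates constants). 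Making the application of the Crouzeix--Kressner bound to the \emph{Fr\'echet derivative} rigorous — rather than to $g(A_G)$ directly — is the delicate point; I expect one either cites a form of that result already stated for Fr\'echet derivatives or reduces to it via the $2n \times 2n$ block-matrix representation of $L_g$, whose numerical range is controlled by $W(A_G)$.
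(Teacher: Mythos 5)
Your proposal is correct and follows essentially the same route as the paper's own proof: annihilate the $(u,v)$ entry of $L_q(A_G,E_{ij})$ for a polynomial $q \in \Pi_{m(u,v)}$ via Lemma~\ref{lem:sparsity_pattern_polynomial}, write $[L_f(A_G,E_{ij})]_{uv} = [L_{f-q}(A_G,E_{ij})]_{uv}$ by linearity, bound the entry by $\|L_{f-q}(A_G,\cdot)\|\,\|E_{ij}\|$ and invoke the Crouzeix--Kressner estimate $\|L_g(A_G,\cdot)\| \le C \max_{z \in W(A_G)}|g'(z)|$, then use surjectivity of $q \mapsto q'$ onto $\Pi_{m(u,v)-1}$ to pass to the best-approximation form. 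Your anticipated ``delicate point'' is resolved exactly as you guessed: the paper directly cites a version of that result stated for the Fr\'echet derivative operator norm (\cite[Section~1 and Corollary~5.1]{CrouzeixKressner2020}), so no block-matrix reduction is needed.
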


\begin{remark}\label{rem:decay_node_removal}
If one replaces $E_{ij}$ with $E_v$, by following similar steps as in the proof of Theorem~\ref{the:decay_derivative}, one finds the bound
\begin{equation*}
|[L_f(A_G, E_v)]_{u_1u_2} | \leq C \cdot \sqrt{\deg(v)} \min_{p \in \Pi_{m(u_1,u_2)-1}}\ \max_{z \in W(A_G)} |f^\prime(z) - p(z)|
\end{equation*}
where $\deg(v) := \sum_{u=1}^n w_{vu}$ denotes the ``weighted degree'' of node $v$ (also known as the ``strength'' of $v$) and $m(u_1,u_2) := d(u_1,v) + d(v,u_2)+1$.
\end{remark}

In the special case of the exponential function $f(z) = e^z$ that we are most interested in, we have $f^\prime(z) = f(z)$, so that any polynomial approximation result for $f$ can directly be used to obtain decay bounds for the Fr\'echet derivative. We demonstrate one specific bound obtained this way in the following corollary, which results from combining Theorem~\ref{the:decay_derivative} with~\cite[Lemma 2]{KandolfKoskelaReltonSchweitzer2021}. Note that of course any other polynomial approximation result for the exponential function, like, e.g.,~\cite[Corollary~4.1, Corollary~4.2]{BeckermannReichel2009} could also be used in conjunction with Theorem~\ref{the:decay_derivative} to obtain explicit bounds for the Fr\'echet derivative. The proof of this result is presented in Appendix~\ref{sec:appendix_proofs}.

\begin{corollary}\label{cor:decay_undirected}
Let $G$ be an undirected graph with adjacency matrix $A_G$ and denote the smallest and largest eigenvalue of $A_G$ by $\lmin$ and $\lmax$, respectively. Further, let $E_{ij} = \ve_i\ve_j^T \in \Rnn$ and denote $m(u,v) := d(u,i) + d(j,v)$.

Then, if $\sqrt{\lmax-\lmin} \leq m(u,v) \leq \dfrac{\lmax-\lmin}{2}$, we have the bound
\begin{equation}\label{eq:decay_bound_undirected1}
|[L_{\exp}(A_G, E_{ij})]_{uv}| \leq 2 \frac{\lmax-\lmin}{m(u,v)} e^{\lmax-\frac{4m(u,v)^2}{5(\lmax-\lmin)}}
\end{equation}
and if $m(u,v) > \dfrac{\lmax-\lmin}{2}$, we have the bound
\begin{equation}\label{eq:decay_bound_undirected2}
|[L_{\exp}(A_G, E_{ij})]_{uv}| \leq 8\frac{e^{\lmax}\cdot m(u,v)}{\lmax-\lmin}  \left(\frac{e\cdot(\lmax-\lmin)}{4m(u,v)+2(\lmax-\lmin)}  \right)^{m(u,v)}\!\!\!\!\!\!\!\!\!\!\!\!\!\!.
\end{equation}
\end{corollary}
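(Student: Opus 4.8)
The plan is to combine the general decay estimate of Theorem~\ref{the:decay_derivative} with a scalar best-approximation bound for the exponential, exploiting two simplifications that are available in the undirected setting. First, since $G$ is undirected, $A_G$ is symmetric and hence normal, so the constant in~\eqref{eq:C} reduces to $C = 1$ and the numerical range collapses to the real spectral interval, $W(A_G) = [\lmin, \lmax]$. Second, for $f = \exp$ we have $f' = f$, so Theorem~\ref{the:decay_derivative} gives immediately
\begin{equation*}
|[L_{\exp}(A_G, E_{ij})]_{uv}| \leq \min_{p \in \Pi_{m(u,v)-1}} \max_{z \in [\lmin,\lmax]} |e^z - p(z)|,
\end{equation*}
that is, the entry is controlled by the error of best uniform polynomial approximation of $e^z$ on $[\lmin,\lmax]$ by polynomials of degree at most $m(u,v)-1$.

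Next I would estimate this scalar approximation error. Writing $\rho := \lmax - \lmin$ for the length of the spectral interval and factoring out $e^{\lmax}$ after the shift $z \mapsto z - \lmax$, the problem reduces to bounding the best uniform approximation error of the exponential on an interval of length $\rho$ by a polynomial of degree $m(u,v)-1$, with an overall prefactor $e^{\lmax}$. This is precisely the scalar estimate supplied by~\cite[Lemma~2]{KandolfKoskelaReltonSchweitzer2021}, whose two-regime bounds are expressed in terms of the interval length $\rho$ and the polynomial degree. Inserting degree $m(u,v)-1$—so that the index appearing in the cited bound becomes exactly $m(u,v)$—and collecting the $e^{\lmax}$ prefactor together with the $\rho$-dependent factors then yields the two displayed estimates~\eqref{eq:decay_bound_undirected1} and~\eqref{eq:decay_bound_undirected2}.

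The only genuine bookkeeping is matching the regimes: the window $\sqrt{\rho} \leq m(u,v) \leq \rho/2$ and the complementary case $m(u,v) > \rho/2$ must be seen to coincide with the two cases distinguished in~\cite[Lemma~2]{KandolfKoskelaReltonSchweitzer2021} once the degree $m(u,v)-1$ is inserted and the interval is scaled to length $\rho$. I expect the main obstacle to be purely this constant- and parameter-tracking—carefully propagating the shift by $\lmax$, the factor $e^{\lmax}$, and the substitution of $m(u,v)-1$ for the polynomial degree—rather than anything conceptual; no analytic input beyond Theorem~\ref{the:decay_derivative} and the cited scalar lemma should be required. As a sanity check I would verify that both bounds decay super-exponentially in $m(u,v)$ for fixed $\rho$, consistent with the expected behavior of best approximations to $f'=\exp$, and that they remain finite and positive throughout the stated ranges of $m(u,v)$.
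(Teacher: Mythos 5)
Your proposal is correct and follows essentially the same route as the paper's proof: apply Theorem~\ref{the:decay_derivative} with $C=1$ (by normality of the symmetric $A_G$) and $f'=\exp$, then invoke the two-regime polynomial approximation bound of \cite[Lemma~2]{KandolfKoskelaReltonSchweitzer2021} after a shift by $\lmax$ that extracts the prefactor $e^{\lmax}$, with the regime conditions and constants matching under $\rho = (\lmax-\lmin)/4$ in the lemma's parametrization. The only cosmetic difference is that you perform the shift at the scalar level ($z \mapsto z-\lmax$ inside the min-max), whereas the paper shifts the matrix itself via $L_{\exp}(A_G+\sigma I, E_{ij}) = e^{\sigma}L_{\exp}(A_G,E_{ij})$ applied to $\widetilde{A}_G = A_G - \lmax I$; the two are equivalent.
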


\begin{remark}
The result of Corollary~\ref{cor:decay_undirected} can be used to obtain a priori estimates for the sensitivity of individual nodes with respect to modifications in an edge $(i,j)$. In particular, for the sensitivity of subgraph centrality, an estimate is directly obtained by setting $v = u$ in~\eqref{eq:decay_bound_undirected1} or~\eqref{eq:decay_bound_undirected2}. This mathematically confirms the intuition that nodes are more sensitive to the modification of ``nearby'' edges than to modifications of edges in other parts of the network. It is very similar in spirit to the analysis performed in~\cite{PozzaTudisco2018}, where analogous bounds were obtained directly for the change of centrality scores instead of for their sensitivity.
\end{remark}

\begin{remark}
If the extremal eigenvalues $\lmin, \lmax$ of $A_G$ are not known, one can still obtain decay estimates by inserting suitable bounds. As a simple example, by Ger\v{s}gorin's disk theorem we have for any adjacency matrix of an undirected graph that $\spec(A_G) \subset [-\deg_{\max}, \deg_{\max}]$, where $\deg_{\max}$ denotes the maximum degree of any node in $G$. Thus $\lmax-\lmin \leq 2 \deg_{\max}$, and for graphs where all nodes have similar degrees (which is, e.g., the case for grid-like graphs and many road networks), the decay estimates obtained from using this bound might still carry meaningful information. In graphs with highly varying degrees, the estimates obtained this way will typically not accurately capture the actual decay behavior.
\end{remark}

It is also possible to obtain decay bounds for $L_{\exp}(A_G, E_{ij})$ in the nonsymmetric case, i.e., for directed graphs. In this case, $W(A_G)$ is not an interval but an arbitrary convex set in the complex plane. Depending on the shape of this set (or the shape of a larger set containing it), many different bounds can be obtained, typically in terms of conformal mappings and Faber polynomials; see, e.g.,~\cite{PozzaTudisco2018,BeckermannReichel2009,HochbruckLubich1997} for examples of this technique (mostly in the context of analyzing convergence of Krylov subspace methods instead of finding decay bounds).

In the following corollary, we demonstrate the bounds arising from the assumption that $W(A_G)$ is contained in a disk of radius $r$. One can always take a disk centered at the origin and choose $r = \nu(A_G)$, the numerical radius of $A_G$, i.e., the largest eigenvalue of the Hermitian matrix $\frac12 (A_G+A_G^T)$. When the extent of $W(A_G)$ is not (close to) symmetric to the imaginary axis, better bounds might be achieved by choosing the center of the disk to be different from the origin. The result is stated in terms of the lower incomplete gamma function
\[
\gamma(a,x) = \int_0^x t^{a-1}e^{-t} \d t.
\]
Its proof is given in Appendix~\ref{sec:appendix_proofs}.
\begin{corollary}\label{cor:decay_directed}
Let $G$ be a digraph with adjacency matrix $A_G$ and assume that $W(A_G)$ is contained in a disk of radius $r$ centered at $c$. Further, let $E_{ij} = \ve_i\ve_j^T \in \Rnn$ and denote $m(u,v) := d(u,i) + d(j,v)$. Then
\begin{equation}\label{eq:decay_bound_directed}
|L_{\exp}(A, E_{ij})_{uv}| \leq 2\left(1+\sqrt{2}\right)^2 e^{r+c}\frac{\gamma(m(u,v),r)}{(m(u,v)-1)!}.
\end{equation}
\end{corollary}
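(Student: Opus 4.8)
The plan is to derive the estimate as a direct consequence of Theorem~\ref{the:decay_derivative}, reducing everything to a best polynomial approximation problem for the exponential on a disk. Since we work with $f = \exp$, for which $f' = f = \exp$, and since $A_G$ is in general non-normal in the directed case (so that $C = (1+\sqrt{2})^2$), Theorem~\ref{the:decay_derivative} applied to $f^\prime = \exp$ immediately yields, with the shorthand $m := m(u,v)$,
\begin{equation*}
|[L_{\exp}(A_G, E_{ij})]_{uv}| \le \left(1+\sqrt{2}\right)^2 \min_{p \in \Pi_{m-1}} \max_{z \in W(A_G)} |e^z - p(z)|.
\end{equation*}
Thus the only remaining task is to bound the degree-$(m-1)$ uniform approximation error of $e^z$ over $W(A_G)$.

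The next step is to exploit the hypothesis $W(A_G) \subseteq \{z : |z-c| \le r\}$: I would enlarge the maximum from $W(A_G)$ to this disk and then plug in a concrete near-best polynomial, namely the truncated Taylor expansion of $e^z$ about the center $c$,
\begin{equation*}
p(z) = e^c \sum_{k=0}^{m-1} \frac{(z-c)^k}{k!} \in \Pi_{m-1}.
\end{equation*}
Setting $w = z-c$ with $|w| \le r$, the remainder is $e^z - p(z) = e^c \sum_{k \ge m} w^k/k!$; taking the center $c$ real (which is natural, as $A_G$ is real and hence $W(A_G)$ is symmetric about the real axis, so that $|e^c| = e^c$) and applying the triangle inequality bounds its modulus on the disk by $e^{c} \sum_{k \ge m} r^k/k!$.

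The crucial step is then to recognize this Taylor tail as a multiple of the lower incomplete gamma function. Combining the classical identity $\Gamma(m,r) = (m-1)!\,e^{-r}\sum_{k=0}^{m-1} r^k/k!$ for the upper incomplete gamma function with the complementarity relation $\gamma(m,r) + \Gamma(m,r) = (m-1)!$ gives
\begin{equation*}
\sum_{k=m}^{\infty} \frac{r^k}{k!} = e^r - \sum_{k=0}^{m-1} \frac{r^k}{k!} = e^r\,\frac{\gamma(m,r)}{(m-1)!}.
\end{equation*}
Substituting back yields $\max_{|z-c|\le r}|e^z - p(z)| \le e^{c+r}\,\gamma(m,r)/(m-1)!$, and multiplying by $(1+\sqrt{2})^2$ produces the asserted bound; the extra factor of $2$ in the stated constant is a harmless safety overestimate that can be attached to the Faber/Taylor approximation step on the disk.

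I expect the main obstacle to lie entirely in this last step, specifically in the index bookkeeping: verifying that the degree-$(m-1)$ truncation produces precisely the incomplete gamma function with argument $m = m(u,v)$ normalized by $(m(u,v)-1)!$, and confirming the incomplete-gamma identity for the exponential tail. Everything preceding it is a mechanical application of the already-established Theorem~\ref{the:decay_derivative} together with the triangle inequality, so the genuine mathematical content is the clean closed-form evaluation of the Taylor remainder in terms of $\gamma(\,\cdot\,,\,\cdot\,)$.
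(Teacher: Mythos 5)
Your proof is correct, and it even establishes a bound that is sharper by a factor of $2$ than the one stated, which of course implies the corollary. The overall skeleton is the same as the paper's: apply Theorem~\ref{the:decay_derivative} with $f = \exp$ (so $f' = \exp$) and $C = (1+\sqrt{2})^2$, enlarge the maximum from $W(A_G)$ to the disk, bound the resulting polynomial approximation problem, and convert the exponential tail $\sum_{k \ge m} r^k/k!$ into $e^r \gamma(m,r)/(m-1)!$ via the classical incomplete-gamma identity (the paper uses exactly this identity as well). Where you diverge is the approximation step on the disk: the paper bounds the best approximation error by the tail of the Faber coefficients of $e^z$, citing a general result for convex sets, and then evaluates those coefficients by the residue theorem as $f_k = r^k e^c/k!$; this general Faber-truncation bound is precisely where the factor $2$ in~\eqref{eq:decay_bound_directed} enters. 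You instead plug the explicit truncated Taylor expansion $p(z) = e^c\sum_{k=0}^{m-1}(z-c)^k/k!$ into the minimum and bound the remainder by the triangle inequality. For a disk these are the same polynomial---the Faber polynomials of a disk are the shifted, scaled monomials $((z-c)/r)^k$, so the Faber series of $e^z$ \emph{is} its Taylor series about $c$---but your direct estimate avoids the generic factor $2$ and is entirely self-contained, needing no Faber machinery. What the paper's route buys in exchange is generality: the same argument template extends to non-circular convex regions (via conformal maps and Faber coefficients), where no explicit Taylor-type expansion is available. Your side remark that one may take $c$ real (since $A_G$ is real, $W(A_G)$ is symmetric about the real axis) is also a point the paper leaves implicit, and it is needed for $e^{r+c}$ to make sense as stated.
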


\begin{remark}\label{rem:simpler_bound}
For $a \geq 1$, the lower incomplete gamma function fulfills
\[
\gamma(a,x) \leq \left(1-e^{-x}\right)\frac{x^{a-1}}{a};
\]
see, e.g.,~\cite[eq.~8.10.2]{Nist2010}. Thus, when $m(u,v) \geq 1$, we can replace~\eqref{eq:decay_bound_directed} by the easier to grasp bound
\begin{align}
|L_{\exp}(A, E_{ij})_{uv}| \leq 2\left(1+\sqrt{2}\right)^2 \left(e^{r+c}-1\right) \frac{r^{m(u,v)-1}}{m(u,v)!}. \label{eq:simpler_bound_directed}
\end{align}
Note, however, that the bound~\eqref{eq:simpler_bound_directed} is actually \emph{increasing} in $m(u,v)$ as long as $m(u,v) \leq r$, which is not the case for~\eqref{eq:decay_bound_directed}.
\end{remark}

Next, we state two results with a priori bounds for sensitivity with respect to node removal, which are essentially analogues of Corollary~\ref{cor:decay_undirected} and~\ref{cor:decay_directed} based on the modification given in Remark~\ref{rem:decay_node_removal}. As the lines of argument are analogous to before we just state the final results and refrain from providing all details.

\begin{corollary}\label{cor:decay_undirected_nodes}
Let $G$ be an undirected graph with adjacency matrix $A_G$ and denote the smallest and largest eigenvalue of $A_G$ by $\lmin$ and $\lmax$, respectively. Further, let $E_{v} = -(\ve_v \va_{v:} + \va_{:v}\ve_v^T) \in \Rnn$ and denote $m(u_1,u_2) := d(u_1,v) + d(v,u_2)+1$. Then, if $\sqrt{\lmax-\lmin} + 1 \leq m(u_1,u_2) \leq \dfrac{\lmax-\lmin}{2} + 1$, we have the bound
\begin{equation}\label{eq:decay_bound_undirected_nodes1}
|[L_{\exp}(A_G, E_v)]_{u_1u_2}| \leq 2 \sqrt{\deg(v)} \frac{\lmax-\lmin}{m(u_1,u_2)-1} e^{\lmax-\frac{(m(u_1,u_2)-1)^2}{\frac{5}{4}(\lmax-\lmin)}}
\end{equation}
and if $m(u_1,u_2) > \dfrac{\lmax-\lmin}{2}+1$, we have the bound
\begin{align}
&\phantom{\leq} |[L_{\exp}(A_G, E_v)]_{u_1u_2}| \nonumber\\
&\leq 8 \sqrt{\deg(v)}\frac{e^{\lmax}(m(u_1,u_2)-1)}{\lmax-\lmin} \cdot\left(\frac{e\cdot(\lmax-\lmin)}{4(m(u_1,u_2)-1)+2(\lmax-\lmin)}  \right)^{m(u_1,u_2)-1}\!\!\!\!\!\!\!\!\!\!\!\!\!\!\!\!\!\!\!\!\!\!\!\!.\label{eq:decay_bound_undirected_nodes2}
\end{align}
\end{corollary}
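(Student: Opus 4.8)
The plan is to follow the proof of Corollary~\ref{cor:decay_undirected} essentially verbatim, merely replacing its starting point, Theorem~\ref{the:decay_derivative}, by the node-removal analogue recorded in Remark~\ref{rem:decay_node_removal}. Since $G$ is undirected, $A_G$ is symmetric and hence normal, so that the constant in~\eqref{eq:C} is $C = 1$ and the numerical range collapses to the real interval $W(A_G) = [\lmin,\lmax]$. Applying Remark~\ref{rem:decay_node_removal} with $f = \exp$, and using that $f' = f = \exp$, then reduces the claim to the scalar approximation problem
\[
|[L_{\exp}(A_G, E_v)]_{u_1u_2}| \leq \sqrt{\deg(v)} \min_{p \in \Pi_{m(u_1,u_2)-1}} \max_{z \in [\lmin,\lmax]} \left| e^z - p(z) \right|,
\]
with $m(u_1,u_2) = d(u_1,v) + d(v,u_2) + 1$.

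It remains to estimate the best uniform polynomial approximation error of $\exp$ on $[\lmin,\lmax]$, which I would do exactly as in the appendix proof of Corollary~\ref{cor:decay_undirected}, namely by invoking the two-regime bound of \cite[Lemma~2]{KandolfKoskelaReltonSchweitzer2021}. Its internal case distinction is governed by the relationship between the approximation degree and $\lmax - \lmin$, and translates precisely into the two ranges $\sqrt{\lmax-\lmin}+1 \leq m(u_1,u_2) \leq \tfrac{\lmax-\lmin}{2}+1$ and $m(u_1,u_2) > \tfrac{\lmax-\lmin}{2}+1$ of the statement. Substituting the degree, carrying along the factor $\sqrt{\deg(v)}$, and simplifying the resulting constants then yields~\eqref{eq:decay_bound_undirected_nodes1} and~\eqref{eq:decay_bound_undirected_nodes2}.

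Comparing the final result with Corollary~\ref{cor:decay_undirected}, the only two structural changes are the extra prefactor $\sqrt{\deg(v)}$ and the uniform replacement $m(u,v) \leadsto m(u_1,u_2)-1$ in the thresholds and exponents. I expect the degree bookkeeping to be the main point requiring care: the direction term $E_v = -(\ve_v\va_{v:} + \va_{:v}\ve_v^T)$ carries one extra power of $A_G$ compared with $E_{ij} = \ve_i\ve_j^T$, so the vanishing threshold of Lemma~\ref{lem:sparsity_pattern_polynomial_nodes} is shifted by one relative to Lemma~\ref{lem:sparsity_pattern_polynomial}; this shift, encoded in the ``$+1$'' in the definition of $m(u_1,u_2)$, must be propagated consistently through the admissible approximation degree and into the case split of \cite[Lemma~2]{KandolfKoskelaReltonSchweitzer2021}. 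The second, more routine, technical point is justifying the prefactor $\sqrt{\deg(v)}$, which originates from $\|E_v\|$: writing $E_v$ as a symmetric rank-two term and using $\ve_v^T\va_{:v} = 0$ (no self-loops), its nonzero eigenvalues are $\pm\|\va_{:v}\|$, so that $\|E_v\| = \|\va_{:v}\| = \sqrt{\deg(v)}$ for an unweighted graph, which is the regime in which the stated factor is sharp.
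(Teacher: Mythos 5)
Your proposal matches the paper's own treatment: the paper gives no separate proof of Corollary~\ref{cor:decay_undirected_nodes}, stating only that it follows analogously to the appendix proof of Corollary~\ref{cor:decay_undirected} with Theorem~\ref{the:decay_derivative} replaced by Remark~\ref{rem:decay_node_removal}, i.e., the shift to a (negative semidefinite) matrix with spectrum in $[-(\lmax-\lmin),0]$ followed by the two-regime bound of \cite[Lemma~2]{KandolfKoskelaReltonSchweitzer2021} with $\rho = (\lmax-\lmin)/4$ --- exactly your plan. The details you add beyond the paper (the identification $\|E_v\| = \|\va_{:v}\| = \sqrt{\deg(v)}$ using $\ve_v^T\va_{:v}=0$ in the unweighted case, and the consistent propagation of the degree shift $m(u,v) \leadsto m(u_1,u_2)-1$ through the thresholds and exponents of the case split) are sound and fill in precisely what the paper leaves implicit.
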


\begin{corollary}\label{cor:decay_directed_nodes}
Let $G$ be a digraph with adjacency matrix $A_G$ and assume that $W(A_G)$ is contained in a disk of radius $r$ centered at $c$. Further, let $E_{v} = -(\ve_v \va_{v:} + \va_{:v}\ve_v^T) \in \Rnn$ and denote $m(u_1,u_2) := d(u_1,v) + d(v,u_2)+1$. Then
\begin{equation*}
|L_{\exp}(A_G, E_{v})_{u_1u_2}| \leq 2\left(1+\sqrt{2}\right)^2\sqrt{\deg(v)} e^{r+c}\frac{\gamma(m(u,v),r)}{(m(u_1,u_2)-1)!}.
\end{equation*}
\end{corollary}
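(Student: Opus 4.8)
The plan is to mirror the proof of Corollary~\ref{cor:decay_directed} essentially verbatim, replacing the edge bound of Theorem~\ref{the:decay_derivative} by its node-removal analogue from Remark~\ref{rem:decay_node_removal}. Since $G$ is a digraph, $A_G$ is in general non-normal, so the relevant constant is $C = (1+\sqrt{2})^2$, and Remark~\ref{rem:decay_node_removal} has already reduced the quantity of interest to a scalar polynomial approximation problem. Writing this out with $f = f' = \exp$ and $m(u_1,u_2) = d(u_1,v) + d(v,u_2) + 1$ gives
\[
|[L_{\exp}(A_G, E_v)]_{u_1u_2}| \le (1+\sqrt{2})^2 \sqrt{\deg(v)} \min_{p \in \Pi_{m(u_1,u_2)-1}} \max_{z \in W(A_G)} |\exp(z) - p(z)|.
\]
Thus the node-removal argument differs from the directed edge case only through the extra factor $\sqrt{\deg(v)}$ and the shifted index $m(u_1,u_2)$; the remaining estimation is identical, and in particular the factor of $\sqrt{\deg(v)}$ is simply inherited from Remark~\ref{rem:decay_node_removal}.

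Second, I would bound the best uniform approximation error on the disk, exactly as in the proof of Corollary~\ref{cor:decay_directed}. Because $W(A_G)$ is contained in the disk $D(c,r)$, it suffices to approximate $\exp$ there. Factoring $\exp(z) = e^c\exp(z-c)$ and taking $p$ to be $e^c$ times the degree-$(m(u_1,u_2)-1)$ Taylor polynomial of $\exp$ about $c$, the remainder is analytic, so by the maximum-modulus principle and the triangle inequality on $|z-c| = r$,
\[
\max_{z \in D(c,r)} |\exp(z) - p(z)| \le e^c \sum_{k=m(u_1,u_2)}^{\infty} \frac{r^k}{k!} = e^{c+r}\,\frac{\gamma(m(u_1,u_2),r)}{(m(u_1,u_2)-1)!},
\]
where the last equality is the standard series representation of the lower incomplete gamma function for integer first argument. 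Since the min over $p \in \Pi_{m(u_1,u_2)-1}$ is no larger than this particular choice, inserting this estimate into the displayed reduction and collecting the constants $(1+\sqrt{2})^2$, $e^{r+c}$ and $\sqrt{\deg(v)}$ yields precisely the asserted bound (the numerator should read $\gamma(m(u_1,u_2),r)$, matching the shifted index). The remaining factor of $2$ is inherited from the same scalar approximation estimate used in Corollary~\ref{cor:decay_directed}, so no new work is needed.

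I do not expect a serious obstacle: the whole value of Remark~\ref{rem:decay_node_removal} is that it has already absorbed the structural part of the argument, namely the row/column, low-rank form of $E_v$ together with the sparsity statement of Lemma~\ref{lem:sparsity_pattern_polynomial_nodes}, reducing everything to the same scalar approximation problem as in the edge case. The one point that requires care is the bookkeeping of the polynomial degree: because $E_v$ contains a full row and column of $A_G$, the reachability index gains an extra $+1$, so the admissible degree is $m(u_1,u_2)-1$ with $m(u_1,u_2) = d(u_1,v) + d(v,u_2) + 1$, and one must ensure this shifted $m(u_1,u_2)$ (rather than the edge index $d(u,i)+d(j,v)$) is the quantity that feeds into the incomplete gamma term. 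With that accounting in place, the constants combine exactly as stated.
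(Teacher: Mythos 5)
Your proof is correct and follows the same structure as the paper's (sketched) argument: reduce via Remark~\ref{rem:decay_node_removal} with $C=(1+\sqrt{2})^2$, then solve the scalar approximation problem on the disk, then apply the incomplete-gamma identity~\eqref{eq:bound_gamma_sum}. The only difference lies in the middle step: the paper bounds the best approximation error by a truncated Faber series (with the generic factor $2$ coming from the bound $|\Phi_k(z)|\le 2$ for Faber polynomials on convex sets), computing the Faber coefficients via the residue theorem, whereas you take the Taylor polynomial of $\exp$ about $c$ and bound the tail directly by the triangle inequality. On a disk the two computations coincide---the paper's residue calculation yields $f_k = r^k e^c/k!$, which are exactly the Taylor coefficients---but your route is marginally more elementary and in fact avoids the factor $2$ altogether, so you actually prove a bound sharper by a factor of $2$ than the one stated; the stated inequality then follows \emph{a fortiori}. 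The one imprecision is your closing claim that the factor $2$ is ``inherited from the same scalar approximation estimate'': in your argument it is not inherited from anywhere, it is simply unnecessary, and the cleaner statement is that your bound implies the weaker stated one. Finally, you are right that the numerator in the statement should read $\gamma(m(u_1,u_2),r)$ rather than $\gamma(m(u,v),r)$; this is a typo carried over from Corollary~\ref{cor:decay_directed}, and your bookkeeping of the shifted index $m(u_1,u_2) = d(u_1,v)+d(v,u_2)+1$ is the correct one.
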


To conclude this section, we compare the quality of our decay bounds to the actual sensitivity values for a real-world network.

\begin{example}\label{ex:decay_london_transport}
\begin{figure}
    \centering
    \includegraphics[width=.7\textwidth]{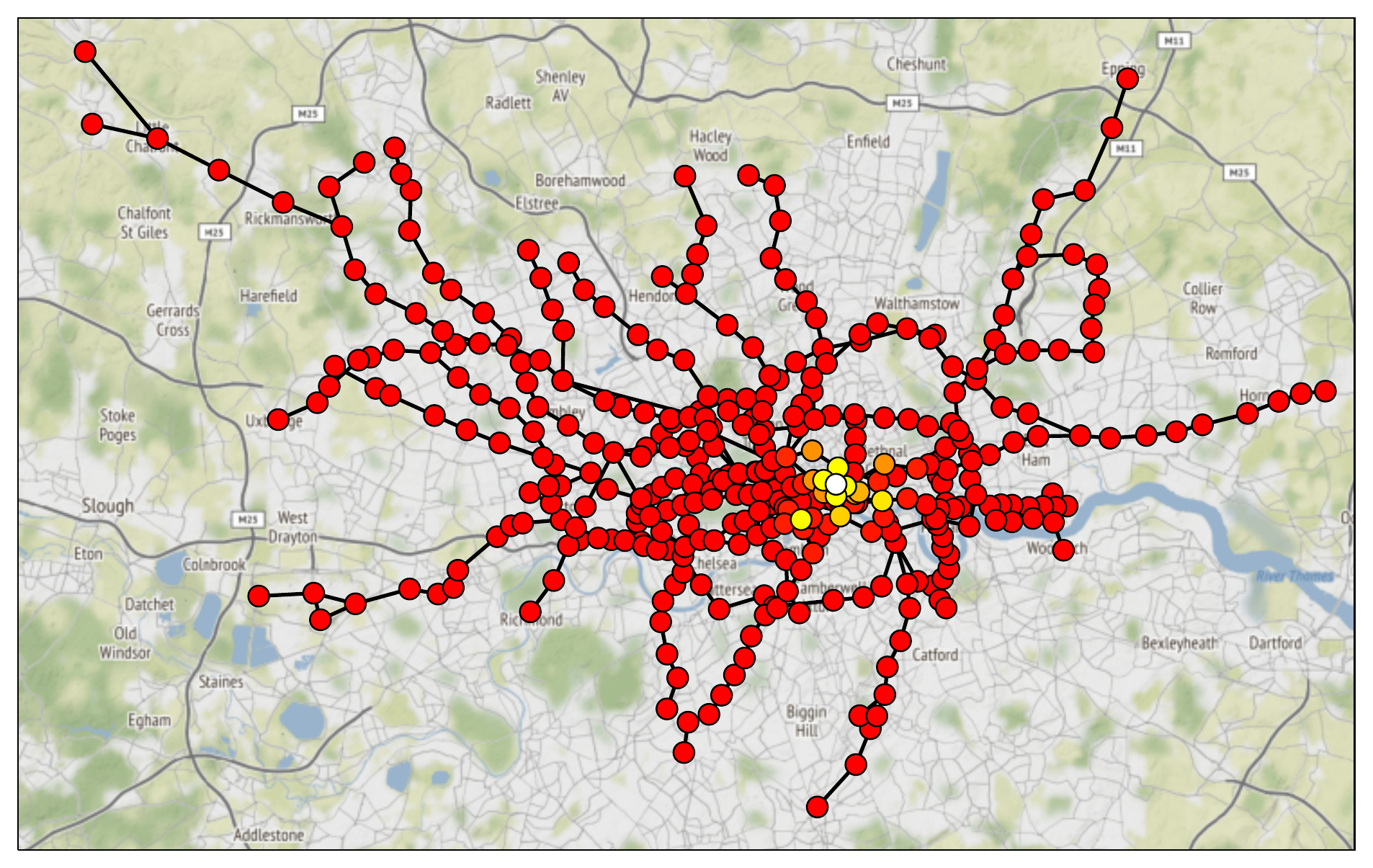}
    \caption{Bounds for the sensitivity of nodes in the London transportation network with respect to the removal of the node corresponding to \emph{Moorgate station} (depicted in white), obtained from~\eqref{eq:decay_bound_undirected_nodes1} and~\eqref{eq:decay_bound_undirected_nodes2}. Light colors correspond to high sensitivity, while dark colors correspond to low sensitivity (street map generated with \texttt{cartopy}~\cite{Cartopy}, map data \textcopyright\ OpenStreetMap).}
    \label{fig:decay_london_graph}
\end{figure}

This example illustrates the result of Corollary~\ref{cor:decay_undirected_nodes}, using a graph representing the London city transportation network~\cite{DeDomenicoDataset,DeDomenicoSoleRibaltaGomezArenas2014}. Each node of the network corresponds to a station and edges between stations indicate train, metro or bus connections. The original dataset~\cite{DeDomenicoDataset} is actually a multilayer network, from which we obtain an undirected graph by aggregating the different layers. The maximum degree of any node in the resulting network is $7$, and its adjacency matrix has spectrum $\spec(A_G) = [-3.24, 3.79]$ (rounded to two decimal digits).

We consider the network modification resulting from the removal of the node corresponding to \emph{Moorgate station} near the center of London and bound the sensitivity of the nodes' subgraph centrality by using~\eqref{eq:decay_bound_undirected_nodes1} and~\eqref{eq:decay_bound_undirected_nodes2} for $u_1 = u_2$. The resulting sensitivity bounds are illustrated by the color-coding in Figure~\ref{fig:decay_london_graph} (the node depicted in white is the removed node, Moorgate station). Only nodes in the direct surrounding of Moorgate station are sensitive to this modification, while the influence rapidly drops off with increasing distance (as one would also intuitively expect). Note that for nodes $u$ with a distance of one to Moorgate station, $m(u,u)$ fulfills neither of the inequalities in Corollary~\ref{cor:decay_undirected_nodes}, so that no bound can be obtained. Whenever this happens, one can of course expect the corresponding nodes to be highly sensitive to the modification at hand (and therefore, we also depict those nodes in light colors in Figure~\ref{fig:decay_london_graph}). Thus, if one is only interested in finding all sensitive nodes, then this is no limitation, but one does not obtain an actual bound for quantifying the influence.

\begin{figure}
    \centering
    \pgfplotsset{height=0.38\linewidth,width=0.8\linewidth,compat=1.10,every axis/.append style={legend style={/tikz/every even column/.append style={column sep=6pt}}}}
    \pgfplotsset{every tick label/.append style={font=\scriptsize}}

\noindent%
    \input{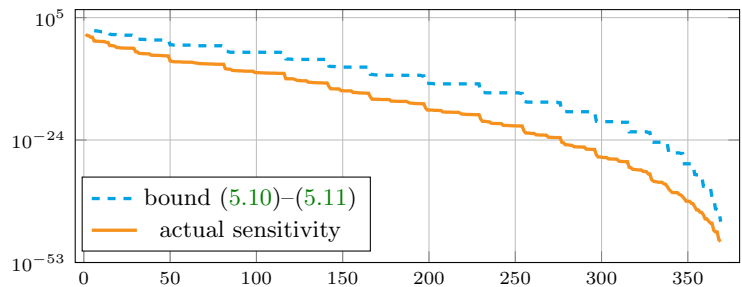}
   
    \caption{Comparison of the bounds~\eqref{eq:decay_bound_undirected_nodes1} and~\eqref{eq:decay_bound_undirected_nodes2} to the actual sensitivity of the nodes subgraph centrality. Nodes are reordered according to their sensitivity (descendingly).}
    \label{fig:decay_london_comparison}
\end{figure}

To gauge how accurately the bounds~\eqref{eq:decay_bound_undirected_nodes1} and~\eqref{eq:decay_bound_undirected_nodes2} capture the actual sensitivity of the nodes' subgraph centralities, we explicitly compute the sensitivities of subgraph centrality of all nodes and compare the obtained values to our bounds; see Figure~\ref{fig:decay_london_comparison}, where we have sorted the nodes decreasingly by their actual sensitivity for better visualization. We can observe that our bounds capture the qualitative behavior of the sensitivity very well, but that the magnitude of the sensitivities is overestimated by some margin (a phenomenon that is also well-known for similar decay bounds for entries of matrix functions). Clearly, the bounds~\eqref{eq:decay_bound_undirected_nodes1} and~\eqref{eq:decay_bound_undirected_nodes2} attain the same value for all nodes having the same distance from Moorgate station, leading to a ``staircase-like'' shape of the bound in Figure~\ref{fig:decay_london_comparison}. Interestingly, we observe a similar pattern, although a little less pronounced, in the actual sensitivities, showing that this is not simply an artifact of the technique used for finding the decay bounds, but a feature that is actually observable in the exact values. 
\end{example}

\section{Concluding remarks}\label{sec:conclusions}
We have proposed a computational procedure for identifying network modifications to which the communicability of the network is most sensitive. For typical real-world networks, the computational complexity of the method scales linearly with the number of nodes, making it feasible also for large scale networks.

We have also extended the concept of network sensitivity with respect to edge modifications from total communicability to subgraph centrality and the Estrada index and we have demonstrated how sensitivity with respect to removal of nodes fits into the framework. 

Additionally, we have derived a priori bounds for the sensitivities (based on sparsity patterns of the Fr\'echet derivative of polynomial matrix functions with structured direction terms), which predict the actual qualitative behavior of sensitivity quite well and give some further intuitive insight into the concept of network sensitivity. These decay bounds might also be of independent interest in other applications where Fr\'echet derivatives with structured direction terms occur. 

It is an interesting topic for future research to compare the edge and node rankings obtained with the sensitivity concept to those obtained by other means for actual analysis of real-world networks (e.g., in the context of vulnerability analysis) and identify the practical advantages and disadvantages of each approach. Another research avenue, which is not related to the analysis of complex networks, is identifying further application areas in which Fr\'echet derivatives with respect to structured, low-rank direction terms play an important role, which could, e.g., benefit from the decay bounds developed in Section~\ref{sec:decay_bounds}.

\paragraph{Acknowledgments} The author wishes to thank the anonymous referees for their helpful comments which helped improve the manuscript.

\appendix
\section{Technical proofs}\label{sec:appendix_proofs}
\begin{proof}[Proof of Theorem~\ref{the:decay_derivative}]
Let $u$ and $v$ be fixed arbitrarily and let $p$ be any polynomial of degree at most $m(u,v)$. Then, by Lemma~\ref{lem:sparsity_pattern_polynomial}, we have $[L_p(A_G, E_{ij})]_{uv} = 0$. Consequently, 
\begin{equation}\label{eq:proof_theorem1}
[L_f(A_G, E_{ij})]_{uv} = [L_f(A_G, E_{ij})]_{uv} - [L_p(A_G, E_{ij})]_{uv} = [L_g(A_G, E_{ij})]_{uv},
\end{equation}
where $g = f-p$. We now bound the absolute value of the right-hand side of~\eqref{eq:proof_theorem1} as
\begin{equation}\label{eq:proof_theorem2}
| [L_g(A_G, E_{ij})]_{uv} | \leq \|L_g(A_G, E_{ij})\| \leq \|L_g(A_G, \cdot)\| \cdot \|E_{ij}\|.
\end{equation}
By \cite[Section~1 and Corollary~5.1]{CrouzeixKressner2020}, we have
\begin{equation*}
\|L_g(A_G, \cdot)\| \leq C \cdot \max_{z \in W(A_G)} |g^\prime(z)|,
\end{equation*}
where $C$ is defined in~\eqref{eq:C}. As $g^\prime = f^\prime - p^\prime$ and $p^\prime \in \Pi_{m(u,v)-1}$, we obtain
\begin{equation}\label{eq:proof_theorem3}
\|L_g(A_G, \cdot)\| \leq C \cdot \min_{p \in \Pi_{m(u,v)-1}}\ \max_{z \in W(A_G)} |f^\prime(z) - p(z)|,
\end{equation}
as $p$ can be chosen arbitrarily. Combining~\eqref{eq:proof_theorem1},~\eqref{eq:proof_theorem2} and~\eqref{eq:proof_theorem3} and using $\|E_{ij}\| = 1$ proves the result.
\end{proof}

\begin{proof}[Proof of Corollary~\ref{cor:decay_undirected}]
It is well known that $\exp(A_G + \sigma I) = \exp(\sigma)  \cdot \exp(A_G)$ for any $\sigma$, and this relation readily carries over to the Fr\'echet derivative, so that we also have 
\begin{equation}\label{eq:multiplicative_factor_frechet}
L_{\exp}(A_G + \sigma I, E_{ij}) = e^{\sigma} \cdot L_{\exp}(A_G, E_{ij}).
\end{equation} 
Define the negative semidefinite matrix $\widetilde{A}_G := A_G - \lmax I$ with spectrum in $[-\lmax+\lmin, 0]$. By~\eqref{eq:multiplicative_factor_frechet}, we then have 
\begin{equation}\label{eq:multiplicative_constant_Atilde}
[L_{\exp}(A_G, E_{ij})]_{uv} = e^{\lmax} \cdot [L_{\exp}(\widetilde{A}_G, E_{ij})]_{uv}.
\end{equation}
The entries of $L_{\exp}(\widetilde{A}_G, E_{ij})$ can now be bounded via Theorem~\ref{the:decay_derivative}, using the polynomial approximation result of~\cite[Lemma~2]{KandolfKoskelaReltonSchweitzer2021} for positive semidefinite matrices. This result states that
\begin{equation*}
\min_{p \in \Pi_{m-1}}\ \max_{z \in [-4\rho, 0]} |\exp(z)-p(z)| \leq \begin{cases}
8 \frac{\rho}{m} e^{-\frac{m^2}{5\rho}}, & \text{ if } \sqrt{4 \rho } \leq m \leq 2 \rho ,\\[2ex]
2 \frac{m}{\rho}  \left(\frac{e\rho}{m+2\rho}  \right)^m, &  \text{ if } m > 2 \rho.
\end{cases}
\end{equation*}
Inserting $\rho = (\lmax-\lmin)/4$ and combining with~\eqref{eq:multiplicative_constant_Atilde} proves the result.
\end{proof}

\begin{proof}[Proof of Corollary~\ref{cor:decay_directed}]
Let $\Delta := \Delta_{r,c}$ denote the closed disk of radius $r > 1$ centered at $c$. Denote by $\phi$ the conformal mapping from the exterior of $\Delta$ onto the exterior of the unit disk and by $\psi$ its inverse. Clearly, $\phi(z) = (z-c)/r$ and $\psi(w) = rw+c$. As $W(A_G) \subseteq \Delta$, it follows from Theorem~\ref{the:decay_derivative} that
\begin{equation}\label{eq:proof_decay_directed_new1}
|L_{\exp}(A_G, E_{ij})_{uv}| \leq \left(1+\sqrt{2}\right)^2 \cdot \min_{p \in\Pi_{m(u,v)-1}}\ \max_{z \in \Delta} |e^{z} - p(z)|.
\end{equation}
It is well-known (see, e.g.,~\cite[Section~2,~3]{BeckermannReichel2009}) that the right-hand side of~\eqref{eq:proof_decay_directed_new1} can be bounded in terms of the Faber coefficients of the exponential function, which gives
\begin{equation}\label{eq:proof_decay_directed_new2}
|L_{\exp}(A_G, E_{ij})_{uv}| \leq 2\left(1+\sqrt{2}\right)^2 \sum\limits_{k=m(u,v)}^{\infty}|f_k|
\end{equation}
where
\[
f_k = \frac{1}{2\pi i} \int_{|w| = R} \frac{e^{\psi(w)}}{w^{k+1}}  \d w.
\]
for any $R > 1$. Due to the simple nature of $\psi$, we can explicitly compute the Faber coefficients by the residue theorem, which gives
\begin{equation}\label{eq:proof_decay_directed_new3}
f_k = \frac{1}{2\pi i} \int_{|w| = R} \frac{e^{\psi(w)}}{w^{k+1}}  \d w = \frac{r^ke^c}{k!}.
\end{equation}
Further, we have the following relation for the lower incomplete gamma function, 
\begin{equation}\label{eq:bound_gamma_sum}
\sum\limits_{k = a}^\infty \frac{x^k}{k!} = e^x\frac{\gamma(a,x)}{(a-1)!}.
\end{equation}
which can, e.g., be obtained from~\cite[eq.~(1.7)--(1.8)]{JonesThron1985} by simple algebraic manipulations. Inserting~\eqref{eq:proof_decay_directed_new3} and~\eqref{eq:bound_gamma_sum} into~\eqref{eq:proof_decay_directed_new2} then yields
\[
|L_{\exp}(A_G, E_{ij})_{uv}| \leq 2\left(1+\sqrt{2}\right)^2 \sum\limits_{k=m(u,v)}^{\infty}\frac{r^ke^c}{k!} = 2\left(1+\sqrt{2}\right)^2 e^{r+c}\frac{\gamma(m(u,v),r)}{(m(u,v)-1)!},
\]
which completes the proof.
\end{proof}

\bibliography{matrixfunctions}
\bibliographystyle{abbrv}

\end{document}